\documentclass[12pt,reqno]{amsart}
\usepackage{amsmath}
\usepackage{amssymb}
\usepackage{amstext}
\usepackage{a4wide}
\usepackage{graphicx}
\usepackage{mathrsfs}
\allowdisplaybreaks \numberwithin{equation}{section}
\usepackage{color}
\usepackage{cases}

\numberwithin{equation}{section}

\newtheorem{theorem}{Theorem}[section]

\newtheorem{corollary}[theorem]{Corollary}
\newtheorem{lemma}[theorem]{Lemma}

\theoremstyle{definition}

\newtheorem{definition}[theorem]{Definition}

\theoremstyle{remark}
\newtheorem{remark}[theorem]{Remark}

\begin{document}

\title
{Asymptotic Behavior of Global vortex rings}

 \author{Daomin Cao, Jie Wan, Guodong Wang,  Weicheng Zhan}

\address{Institute of Applied Mathematics, Chinese Academy of Sciences, Beijing 100190, and University of Chinese Academy of Sciences, Beijing 100049,  P.R. China}
\email{dmcao@amt.ac.cn}

\address{Institute of Applied Mathematics, Chinese Academy of Sciences, Beijing 100190, and University of Chinese Academy of Sciences, Beijing 100049,  P.R. China}
\email{wanjie15@mails.ucas.edu.cn}

\address{Institute for Advanced Study in Mathematics, Harbin Institute of Technology, Harbin 150001, P.R. China}
\email{wangguodong14@mails.ucas.edu.cn}

\address{Institute of Applied Mathematics, Chinese Academy of Sciences, Beijing 100190, and University of Chinese Academy of Sciences, Beijing 100049,  P.R. China}
\email{zhanweicheng16@mails.ucas.ac.cn}


\begin{abstract}
 In this paper, we are concerned with nonlinear desingularization of steady
vortex rings in $\mathbb{R}^3$ with given general nonlinearity $f$. Using the improved vorticity method,
we construct a family of steady vortex rings which constitute a desingularization of the
classical circular vortex filament in the whole space. The requirements on $f$ are very general, and $f$ may have a simple discontinuity at the origin. Some qualitative
and asymptotic properties are also established.
\end{abstract}

\maketitle

\section{Introduction}

The motion of an incompressible steady Euler fluid in $\mathbb{R}^3$ is governed by the following Euler equations

\begin{equation}\label{1-1}
~(\mathbf{v}\cdot\nabla)\mathbf{v}=-\nabla P,
\end{equation}
\begin{equation}\label{1-2}
 \nabla\cdot\mathbf{v}=0,
\end{equation}
where $\mathbf{v}=(v_1,v_2,v_3)$ is the velocity field and $P$ is the scalar pressure. Let $\{\mathbf{e}_r, \mathbf{e}_\theta, \mathbf{e}_z\}$ be the usual cylindrical coordinate frame. Then if the velocity field $\mathbf{v}$ is axisymmetric, i.e., $\mathbf{v}$ does not depend on the $\theta$ coordinate, it can be expressed in the following way
\begin{equation*}
  \mathbf{v}=v^r(r,z)\mathbf{e}_r+v^\theta(r,z)\mathbf{e}_\theta+v^z(r,z)\mathbf{e}_z.
\end{equation*}
 The component $v^\theta$ in the $\mathbf{e}_\theta$ direction is called the swirl velocity. Let $\pmb{\omega}:=\nabla\times\mathbf{v}$ be the corresponding vorticity field. We shall refer to an axisymmetric non-swirling flow~($v^\theta \equiv 0$ ) as ``vortex ring'' if there is a toroidal region inside of which $\boldsymbol{\omega}\not = 0$, and outside of which $\boldsymbol{\omega}= 0$. Note that the conservation of mass equation \eqref{1-2} furnishes a Stokes stream function $\Psi$ such that
 \begin{equation}\label{vvv}
     \mathbf{v}=\frac{1}{r}\left(-\frac{\partial\Psi}{\partial z}\mathbf{e}_r+\frac{\partial\Psi}{\partial r}\mathbf{e}_z\right).
 \end{equation}
  In terms of the Stokes stream function $\Psi$, the problem can be reduced to a free boundary problem on the half plane $\Pi=\{(r,z)\mid r>0\}$ of the form:
\begin{numcases}
{(\mathscr{P})\ \ \  }
\label{1-5} \mathcal{L}\Psi =0 \,\ \, \ \ \ \ \ \ \  \ \, &\text{in}\  $\Pi \backslash A$,  \\
\label{1-6} \mathcal{L}\Psi=\lambda f(\Psi) \ \ \ \  &\text{in}\  $A$,\\
\label{1-7}  \Psi(0,z)=-\mu \le 0, \ \ \ \ \ \ \ \ \ \ \ \ \ \ \, &\\
\label{1-8} \Psi=0 \ \ \ \  &\text{on}\  $\partial A$,\\
\label{1-9} \frac{1}{r}\frac{\partial \Psi}{\partial r} \to -\mathscr{W}\ \text{and}\ \frac{1}{r}\frac{\partial \Psi}{\partial z} \to 0\ \ \text{as}\ \ r^2+z^2\to \infty,&
\end{numcases}
where
\begin{equation*}
  \mathcal{L}:=-\frac{1}{r}\frac{\partial}{\partial r}\Big(\frac{1}{r}\frac{\partial}{\partial r}\Big)-\frac{1}{r^2}\frac{\partial^2}{\partial z^2}.
\end{equation*}
Here the vorticity function $f$ and the vortex-strength parameter $\lambda>0$ are prescribed; $A$ is the (a priori unknown) cross-section of the vortex ring; $\mu$ is called the flux constant measuring the flow rate between the $z$-axis and $\partial A$; The constant $\mathscr{W}>0$ is the propagation speed, and the condition \eqref{1-9} means that the limit of the velocity field $\mathbf{v}$ at infinity is $-\mathscr{W} \mathbf{e}_z$. For a detailed presentation of this model the reader is referred to \cite{BF1}. We shall say that $\Psi \in C^1(\Pi)\cap C^2(\Pi\backslash \partial A)$ is a classical solution of $(\mathscr{P})$ if it solves the first two equations in $(\mathscr{P})$ almost everywhere. We remark that once the Stokes stream function $\Psi$
is obtained, one can easily construct the corresponding solutions of the original
variables $(\mathbf{v}, P)$. In fact, let $F:\mathbb{R}\to \mathbb{R}$ satisfy $F'=f$. Then once we find the Stokes stream function $\Psi$, the velocity of the flow is given by \eqref{vvv}, and the pressure is given by
$P=-F(\Psi)-\frac{1}{2}|\mathbf{v}|^2$, and the corresponding vorticity field is given by $\pmb{\omega}=\lambda rf(\Psi)\mathbf{e}_\theta$.

The study of vortex rings can be traced back to the works of Helmholtz \cite{He} in 1858 and Kelvin \cite{Tho} in 1867. Global existence of vortex rings was first established by Fraenkel and Berger \cite{BF1}. However, the vortex-strength parameter $\lambda$ in \cite{BF1} arises as a Lagrange multiplier and hence is left undetermined. After \cite{BF1}, several works are devoted to further studying the existence of vortex rings. See, e.g., \cite{AS,BB,B1,BF1,FT,Ni,Yang2} and references therein. In addition, \cite{Lim95, MGT, Sa92} are some good historical reviews of vortex rings.

The purpose of this paper is to investigate the asymptotic behaviour of the solution pair $(\Psi^\lambda, A^\lambda)$ of $(\mathscr{P})$ as $\lambda \to +\infty$. More precisely, we shall construct a family of steady vortex rings under some assumptions on $f$, which constitute a desingularization of the classical circular vortex filament in $\mathbb{R}^3$. The class of $f$ we consider here includes all $s_+^p$ with $p>0$. The Stokes stream function $\Psi^\lambda$ will bifurcate from the Green's funtion of $\mathcal{L}$ as $\lambda\to +\infty$. This kind of bifurcation phenomenon is called ``nonlinear desingularization''(refer to \cite{BF2}). For some special nonlinearity $f$, there are already some desingularization results. In \cite{FT}, Friedman and Turkington obtained some desingularization results of steady vortex rings when $f\equiv \text{contant}$. In \cite{Ta}, Tadie studied the asymptotic behaviour by letting the flux constant $\mu$ diverge. In \cite{Yang2}, Yang has studied asymptotic behaviour of vortex rings with some given $f$. We mention that their assumptions on $f$ are stronger than ours. One key assumption is that they need $f$ to satisfy the Ambrosetti-Rabinowitz condition (see $(\text{f}_2)$ in \cite{Yang2}). Our approach does not need this condition. Moreover, their limiting objects are degenerate vortex rings with vanishing circulation.  Our result provides a desingularization of singular vortex filaments with nonvanishing vorticity (see Section 2 below). In \cite{DV}, de Valeriola and Van Schaftingen  also studied desingularization of steady vortex rings with $f(s)=s_+^p$ for $p>1$. In such a case, our results are similar to theirs\,(cf.\,Theorem 1 in \cite{DV}). Our work here may be regarded as an extension of \cite{DV}. Recently, Cao et al. \cite{CWZ} further investigated desingularization of vortex rings when $f\equiv \text{contant}$ and generalized the results in \cite{FT} to some extent. Besides the results mentioned above, we do not know any desingularization results for steady vortex rings in the whole space. Similar problems confined in other domains can be found in \cite{CWZ,D1, D2, DV, Ta}.

The paper is organized as follows. In Section 2, we state our main result. The proof of Theorem \ref{thm} is given in Section 3.

\section{Main results}

 Throughout the sequel we shall use the following notations: Lebesgue measure on $\mathbb{R}^2$ is denoted $\textit{m}$, and is to be understood as the measure defining any $L^p$ space and $W^{1,p}$ space, except when stated otherwise; $\nu$ denotes the measure on $\Pi$ having density $r$ with respect to $\textit{m}$, $|\cdot|$ denotes $\nu$ measure; $B_\delta(y)$ denotes the open ball in $\mathbb{R}^2$ of radius $\delta$ centered at $y$; $\chi_{_\Omega}$ denotes the characteristic function of $\Omega \subseteq \mathbb{R}^2$.

Let $f:\mathbb{R}\to \mathbb{R}$ be a real function. For technical reasons, we make the following assumptions on $f$:
\begin{itemize}\label{3-1}
\item[($\text{f}_1)$] $f$ is locally H\"older continuous on $\mathbb{R}\backslash\{0\}$, $f(s)\equiv 0$ for $s\le 0$, and $f$ is strictly increasing in $(0,+\infty)$ with $\lim_{s \to 0^+}f(s)=\gamma \ge 0$;
\item[$(\text{f}_2)$] there exists some positive number $\delta_0\in(0,1)$ such that
\[\int_{0}^{s}\left(f(t)-\gamma\right)dt \le \delta_0 \left(f(s)-\gamma\right)s, \ \ \forall~ s\ge 0. \]
\item[$(\text{f}_3)$] For all $\tau >0$,
$$\lim_{s\to +\infty}\left(f(s)\,e^{-\tau s}\right)=0.$$
\end{itemize}
 Many nonlinearities that frequently appear in nonlinear elliptic equations satisfy ($\text{f}_1)$--($\text{f}_3)$, for instance $f(s)=s_+^p$ with $p\in(0,+\infty)$. Note that $f$ may have a simple discontinuity, which corresponding to a jump in vorticity
at the boundary of the cross-section of the vortex ring.

Our main result in this papar is as follows.
\begin{theorem}\label{thm}
Suppose $f$ satisfies $(\text{f}_1)$-$(\text{f}_3)$. Then for every $W>0$, $\kappa>0$ and all sufficiently large $\lambda>0$, there exists a classical solution $\Psi^\lambda \in C^1(\Pi)\cap C^2(\Pi\backslash \partial A^\lambda)$ of $(\mathscr{P})$ with
\begin{itemize}
  \item [(i)] $A^\lambda=\{(r,z)\in\Pi \mid \Psi^\lambda(r,z)>0\},\ \ \mu^\lambda= \frac{3\kappa^2}{64\pi^2W}\log \lambda+O(1),\ \ \mathscr{W}^\lambda= \frac{W}{2}\log\lambda.$
  \item [(ii)]Let $\omega^\lambda=\lambda r f(\Psi^\lambda)$ be the azimuthal vorticity, then $\int_\Pi \omega^\lambda d\textit{m} \equiv \kappa$ and $\text{supp}(\omega^\lambda)=A^\lambda$.
  \item [(iii)] There exists some $R_0>1$ not depending on $\lambda$ such that $diam A^\lambda \le R_0 \lambda^{-{1}/{2}}$. Moreover, one has
  \begin{equation*}
  \lim_{\lambda \to +\infty }dist\left(A^\lambda, \left(\frac{\kappa}{4\pi W},0\right) \right)=0.
\end{equation*}
\end{itemize}
\end{theorem}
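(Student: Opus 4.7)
I would construct $(\Psi^\lambda, A^\lambda)$ by the improved vorticity method. Work with the rescaled azimuthal vorticity $\zeta=\omega^\theta/r$; then $(\mathscr{P})$ amounts to $\mathcal{L}\Psi=\zeta$ in $\Pi$ with $\zeta=\lambda f(\Psi)\chi_{\{\Psi>0\}}$, the axis value $-\mu$, and propagation speed $\mathscr{W}^\lambda:=(W/2)\log\lambda$. Writing $\Psi=\mathcal{G}\zeta-\mathscr{W}^\lambda r^2/2-\mu$, with $\mathcal{G}$ the Green operator of $\mathcal{L}$ on $\Pi$ (vanishing on the axis and at infinity), absorbs the boundary data, so the remaining equation becomes the Euler--Lagrange identity for a constrained extremum of
\begin{equation*}
E_\lambda(\zeta)=\frac{1}{2}\int_\Pi\zeta\,\mathcal{G}\zeta\,d\nu-\frac{\mathscr{W}^\lambda}{2}\int_\Pi r^2\,\zeta\,d\nu-\frac{1}{\lambda}\int_\Pi J_\lambda(\zeta)\,d\nu
\end{equation*}
over $\mathcal{M}_\lambda=\{\zeta\in L^\infty(\Pi):\zeta\ge 0,\ \int_\Pi\zeta\,d\nu=\kappa,\ \zeta\le\Lambda_\lambda,\ \mathrm{supp}(\zeta)\subset K\}$. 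Here $J_\lambda$ is the convex primitive with $J_\lambda'(v)=\lambda f^{-1}(v/\lambda)$, and the ceiling $\Lambda_\lambda$ together with the compact annular region $K\subset\Pi$ containing $(\kappa/(4\pi W),0)$ are auxiliary bounds chosen so the maximizer does not saturate them.

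\textbf{Existence and identification.} The class $\mathcal{M}_\lambda$ is weak-$\ast$ sequentially compact in $L^\infty$; $\mathcal{G}$ is compact on $L^p(K)$, so the quadratic term is weakly continuous; the $J_\lambda$ term is convex and hence weakly lower semicontinuous. Therefore $E_\lambda$ is weakly upper semicontinuous and attains its maximum at some $\zeta^\lambda\in\mathcal{M}_\lambda$. A standard first-order inner variation along mass-preserving perturbations, together with a bathtub-type comparison, yields a Lagrange multiplier $\mu^\lambda$ such that on $\{\zeta^\lambda>0\}$,
\begin{equation*}
\mathcal{G}\zeta^\lambda-\frac{\mathscr{W}^\lambda r^2}{2}-\frac{1}{\lambda}J_\lambda'(\zeta^\lambda)=\mu^\lambda,
\end{equation*}
with the opposite inequality on $\{\zeta^\lambda=0\}\cap K^\circ$. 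Setting $\Psi^\lambda:=\mathcal{G}\zeta^\lambda-\mathscr{W}^\lambda r^2/2-\mu^\lambda$ and inverting $J_\lambda'$ (which by construction sends $\lambda f(s)$ to $\lambda s$) gives $\zeta^\lambda=\lambda f(\Psi^\lambda)\chi_{\{\Psi^\lambda>0\}}$, so $\omega^\lambda=r\zeta^\lambda=\lambda r f(\Psi^\lambda)$ solves $(\mathscr{P})$; standard semilinear elliptic regularity combined with $(\text{f}_1)$ upgrades $\Psi^\lambda$ to $C^1(\Pi)\cap C^2(\Pi\setminus\partial A^\lambda)$ with $A^\lambda=\{\Psi^\lambda>0\}$. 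A crucial check is that $\zeta^\lambda$ is interior to the side constraints $\zeta\le\Lambda_\lambda$ and $\mathrm{supp}(\zeta)\subset K$; this is where $(\text{f}_2)$ and $(\text{f}_3)$ enter.

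\textbf{Asymptotic analysis.} The quantitative input is the classical elliptic-integral expansion $G((r,z),(r',z'))=-(2\pi)^{-1}\sqrt{rr'}\,\log|(r,z)-(r',z')|+\text{smoother}$ of the Green's function of $\mathcal{L}$ on $\Pi$. Evaluating $E_\lambda$ on a trial $\zeta$ of mass $\kappa$ supported in a disc of radius $\lambda^{-1/2}$ around $(r_\ast,z_\ast)$ produces the lower bound
\begin{equation*}
E_\lambda\ge\frac{\kappa^2 r_\ast}{8\pi}\log\lambda-\frac{W\kappa r_\ast^2}{4}\log\lambda+O(1),
\end{equation*}
whose leading $\log\lambda$ part is maximized precisely at $r_\ast=\kappa/(4\pi W)$. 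A matching upper bound for $E_\lambda(\zeta^\lambda)$ is obtained by splitting the self-energy into a singular near-diagonal piece (controlled by the ceiling $\Lambda_\lambda$ together with an isoperimetric-type inequality) and a smooth remainder. Comparing the bounds forces $\mathrm{dist}(A^\lambda,(\kappa/(4\pi W),0))\to 0$, and a contradiction against the sharp lower bound delivers $\mathrm{diam}(A^\lambda)\le R_0\lambda^{-1/2}$. Finally, evaluating the Euler--Lagrange equation on $\partial A^\lambda$ where $\Psi^\lambda=0$ and retaining only the leading $\log\lambda$ contributions yields $\mu^\lambda=(3\kappa^2/(64\pi^2 W))\log\lambda+O(1)$.

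\textbf{Main obstacle.} The hardest step is the sharp two-scale expansion of $E_\lambda(\zeta^\lambda)$ for a general $f$ that may be discontinuous at $0$ and need not satisfy the Ambrosetti--Rabinowitz condition used in \cite{Yang2}. Assumption $(\text{f}_2)$ serves as a substitute for AR by supplying the one-sided comparison of $F(s)=\int_0^s f$ against $sf(s)$, which is precisely what keeps the regularization $\lambda^{-1}\int J_\lambda(\zeta^\lambda)d\nu$ a subleading correction; $(\text{f}_3)$ keeps the ceiling $\Lambda_\lambda$ and the growth of $f$ mutually compatible as $\lambda\to\infty$. The possible jump of $f$ at $0$ means that free-boundary regularity on $\partial A^\lambda$ cannot be invoked off-the-shelf and must be handled separately, for instance via a Hopf-type barrier argument exploiting the strict monotonicity of $f$ on $(0,\infty)$.
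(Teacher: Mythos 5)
Your proposal is correct and follows essentially the same route as the paper: a dual variational principle for $\zeta=\omega^\theta/r$ with the convex penalization built from $f^{-1}$ (your $\lambda^{-1}J_\lambda$ is exactly the paper's $\varepsilon^{-2}\int G(\varepsilon^2\zeta)\,d\nu$ under $\lambda=\varepsilon^{-2}$), a bathtub-principle Euler--Lagrange identity, elimination of the artificial ceiling via $(\text{f}_2)$--$(\text{f}_3)$, and Turkington-style energy expansions from the logarithmic kernel of $\mathcal{L}$ to force concentration at $r_*=\kappa/(4\pi W)$ and to read off $\mu^\lambda$. The only cosmetic differences are that the paper imposes the mass constraint as an inequality $\int\zeta\,d\nu\le\kappa$ (proving saturation afterwards) and uses the strip $\{r<d\}$ rather than a compact annulus.
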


\begin{remark}
  Kelvin and Hicks showed that if the vortex ring with circulation $\kappa$ has radius $r_*$ and its cross-section $\varepsilon$ is small, then the vortex ring moves at the velocity\,(see, e.g., \cite{Lamb})
\begin{equation*}
  \frac{\kappa}{4\pi r_*}\Big(\log \frac{8r_*}{\varepsilon}-\frac{1}{4}\Big).
\end{equation*}
One can see that our result is consistent with this Kelvin-Hicks formula.
\end{remark}

Our strategy for the proofs of Theorem \ref{thm} is as follows. We transform the problem into a variational problem for the potential vorticity $\zeta:=\omega^\theta/r$, the solutions of which define the desired steady vortex rings. This method is called the vorticity method, which is actually a dual variational principle\,(see, e.g., \cite{Be, CWZ}). Another method to study this problem is called the stream-function method, namely, finding a solution of $(\mathscr{P})$ directly\,(see, e.g., \cite{AS, DV, BF1, Ni, Yang2}). In the study of existence, the key step is to select the appropriate variational admissible class. To this end, we introduce an additional parameter $\Lambda$. By choosing $\Lambda$ appropriately, we can prove that each maximizer of the energy functional over the admissible class will yield a desired solution.  In the study of asymptotic behavior, our method is motivated by the work of Turkington \cite{Tur83}\,(see also \cite{FT}). The key idea is to estimate the order of energy as optimally as possible. We will show that in order to
maximize the energy, these solutions must be concentrated.

\section{Proofs}
As in \cite{BB},  we define the inverse of $\mathcal{L}$ as follows.
\begin{definition}
The Hilbert space $H(\Pi)$ is the completion of $C_0^\infty(\Pi)$ with the scalar products
\begin{equation*}
  \langle u,v\rangle_H=\int_\Pi\frac{1}{r^2}\nabla u\cdot\nabla v d\nu.
\end{equation*}
We define inverse $\mathcal{K}$ for $\mathcal{L}$ in the weak solution sense,
\begin{equation}\label{2-1}
  \langle \mathcal{K}u,v\rangle_H=\int_\Pi uv d\nu \ \ \ for\  all\  v\in H(\Pi), \ \ when \ u \in  L^{10/7}(\Pi,r^3drdz).
\end{equation}
\end{definition}
 Notice that assumption $(\text{f}_2)$ implies $\lim_{s\to +\infty}f(s)=+\infty$ (see \cite{Ni}). One can easily check that $(\text{f}_2)$ is in fact equivalent to
\begin{itemize}
 \item[ $(\text{f}_2)'$] There exists $\delta_1\in(0,1)$ such that
\[G(s) \ge \delta_1 s\, g(s),\,\,\forall\,s\geq0,\]
where $g(s)=0$ in $(-\infty,\gamma]$, $g(s)=f^{-1}(s)$ in $(\gamma,+\infty)$, and $G(s)=\int_0^s g(t)dt$.
\end{itemize}
We note that $g\in C(\mathbb{R})$ is a nonnegative increasing function and $G\in C^2(\mathbb{R})$ is a convex function.
\subsection{Variational problem}
Let $\kappa>0$, $W>0$ be fixed and $0<\varepsilon<1$ be a parameter. Let $d=\kappa/(4\pi W)+1$ and $D=\{(r,z)\in \Pi\mid r<d\}$. Define
\begin{equation*}
\mathcal{A}_{\varepsilon,\Lambda}:=\{\zeta\in L^\infty(\Pi)~|~\int_{\Pi}\zeta d\nu \le \kappa, ~ 0\le \zeta \le \frac{\Lambda}{\varepsilon^2},\ \text{and} \ \zeta(r,z)=0 \ \mbox{if}\ r \ge d \},
\end{equation*}
where $\Lambda>\gamma+1$ is a parameter, which will be determined later.
Consider the maximization problem of the following functional over $\mathcal{A}_{\varepsilon,\Lambda}$
$$E_\varepsilon(\zeta)=\frac{1}{2}\int_D{\zeta \mathcal{K}\zeta}d\nu-\frac{W}{2}\log\frac{1}{\varepsilon}\int_{D}r^2\zeta d\nu-\frac{1}{\varepsilon^2}\int_D G(\varepsilon^2\zeta)d\nu.$$
For any $\zeta\in \mathcal{A}_{\varepsilon,\Lambda}$, by classical elliptic estimate, we have $\mathcal{K}\zeta\in W^{2,p}_{\text{loc}}(\Pi)$ for any $1<p<+\infty$. Let $\zeta^*$ denote the Steiner symmetrization of $\zeta$ with respect to the line $z=0$ in $D$ (see Appendix I of \cite{BF1}).

\begin{lemma}\label{le1}
There exists $\zeta=\zeta^{\varepsilon,\Lambda} \in \mathcal{A}_{\varepsilon,\Lambda} $ such that
\begin{equation*}
 E_\varepsilon(\zeta)= \max_{\tilde{\zeta} \in \mathcal{A}_{\varepsilon,\Lambda}}E_\varepsilon(\tilde{\zeta})<+\infty.
\end{equation*}
Moreover,
\begin{equation}\label{6-1}
\zeta^{\varepsilon,\Lambda}=(\zeta^{\varepsilon,\Lambda})^*=\frac{1}{\varepsilon^2}f(\psi^{\varepsilon,\Lambda})\chi_{_{\{0< \psi^\varepsilon <g(\Lambda)\}\cap D}}+\frac{\Lambda}{\varepsilon^2}\chi_{_{\{\psi^{\varepsilon,\Lambda} \ge g(\Lambda)\}\cap D}} \ \ a.e.\  \text{in}\  \Pi,
\end{equation}
where
\begin{equation}\label{6-2}
  \psi^{\varepsilon,\Lambda}=\mathcal{K}\zeta^{\varepsilon,\Lambda}-\frac{Wr^2}{2}\log\frac{1}{\varepsilon} -\mu^{\varepsilon,\Lambda},
\end{equation}
and the Lagrange multiplier $\mu^{\varepsilon,\Lambda} \ge -g(\Lambda)$ is determined by $\zeta^{\varepsilon,\Lambda}$. If $\zeta^{\varepsilon,\Lambda} \not\equiv 0$ and $\mu^{\varepsilon,\Lambda}>0$, then $\int_\Pi \zeta^{\varepsilon,\Lambda} d\nu=\kappa$.
\end{lemma}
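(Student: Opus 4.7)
My plan is a three-step argument: the direct method of the calculus of variations for existence, a Steiner-rearrangement step for the symmetry $\zeta^{\varepsilon,\Lambda}=(\zeta^{\varepsilon,\Lambda})^*$, and a first-variation analysis to extract the structural formula \eqref{6-1}.

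For existence, I would first check that $\sup_{\mathcal{A}_{\varepsilon,\Lambda}} E_\varepsilon<+\infty$: each admissible $\zeta$ lies in $L^\infty$ with $\zeta\le\Lambda/\varepsilon^2$, is supported in $D$, and has bounded $\nu$-mass, so standard elliptic regularity for $\mathcal{L}$ gives a uniform $L^\infty(D)$ bound on $\mathcal{K}\zeta$ (the integrability hypothesis in \eqref{2-1} is met), which controls the quadratic term; the linear term is bounded by $\tfrac{W}{2}\log(1/\varepsilon)\,d^{2}\kappa$ and the entropy term is nonnegative. Along a maximizing sequence I would first Steiner-symmetrize in $z$---this does not decrease $E_\varepsilon$, as explained below---so that the $\nu$-mass bound uniformly localizes the essential $z$-support. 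Banach--Alaoglu then produces a weak-$*$ limit $\zeta^{\varepsilon,\Lambda}\in\mathcal{A}_{\varepsilon,\Lambda}$. Passing to the supremum uses three ingredients: compactness of $\mathcal{K}$ on the relevant bounded set (for the quadratic term), weak-$*$ continuity of the linear term, and weak lower semicontinuity of $\zeta\mapsto\int_D G(\varepsilon^2\zeta)\,d\nu$ (from convexity of $G$). Together these give weak upper semicontinuity of $E_\varepsilon$, so the limit is a maximizer.

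For the Steiner symmetry, rearranging in $z$ leaves the linear term invariant (it depends only on $r$), leaves the entropy term invariant by equimeasurability, and does not decrease the quadratic term by the Riesz rearrangement inequality applied to the Green's kernel of $\mathcal{L}$, which is symmetric decreasing in $|z-z'|$. Hence $(\zeta^{\varepsilon,\Lambda})^*$ is also a maximizer and we take $\zeta^{\varepsilon,\Lambda}=(\zeta^{\varepsilon,\Lambda})^*$. For \eqref{6-1}, I would perturb $\zeta^{\varepsilon,\Lambda}+t\eta$ along directions admissible for the box $0\le\zeta\le\Lambda/\varepsilon^2$, the support condition $\operatorname{supp}\zeta\subset D$, and the mass constraint. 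The requirement $\frac{d}{dt}\big|_{t=0^+}E_\varepsilon\le 0$, together with the KKT multiplier $\mu^{\varepsilon,\Lambda}$ for $\int_\Pi\zeta\,d\nu\le\kappa$ and $\psi^{\varepsilon,\Lambda}$ as in \eqref{6-2}, yields the trichotomy: $\psi^{\varepsilon,\Lambda}=g(\varepsilon^2\zeta^{\varepsilon,\Lambda})$ on $\{0<\zeta^{\varepsilon,\Lambda}<\Lambda/\varepsilon^2\}$, $\psi^{\varepsilon,\Lambda}\ge g(\Lambda)$ where $\zeta^{\varepsilon,\Lambda}=\Lambda/\varepsilon^2$, and $\psi^{\varepsilon,\Lambda}\le 0$ where $\zeta^{\varepsilon,\Lambda}=0$. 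Since $g\equiv 0$ on $(-\infty,\gamma]$ and is the inverse of $f$ on $(\gamma,+\infty)$, inverting in the middle regime produces \eqref{6-1}. Complementary slackness gives $\mu^{\varepsilon,\Lambda}>0\Rightarrow\int_\Pi\zeta^{\varepsilon,\Lambda}\,d\nu=\kappa$; the lower bound $\mu^{\varepsilon,\Lambda}\ge -g(\Lambda)$ follows because $\mu^{\varepsilon,\Lambda}<-g(\Lambda)$ would, by the trichotomy, force $\zeta^{\varepsilon,\Lambda}\equiv\Lambda/\varepsilon^2$ on the whole support, contradicting the maximality or the mass bound for the parameters at hand.

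The hardest step is the existence: since $D$ is unbounded in $z$, a priori the maximizing sequence could lose mass at infinity. This is precisely why Steiner symmetrization must be applied before extracting the weak limit---the $\nu$-mass bound then uniformly controls the essential $z$-support and restores compactness. A secondary subtlety is the jump discontinuity of $f$ at $0$ when inverting the Euler-Lagrange relation on the interior regime; this is handled by the convention $g\equiv 0$ on $(-\infty,\gamma]$, which makes the two sides of \eqref{6-1} match correctly across the jump at $\psi^{\varepsilon,\Lambda}=0$.
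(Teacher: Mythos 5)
Your overall strategy (direct method with Steiner symmetrization of the maximizing sequence, then a first-variation/bathtub argument yielding a trichotomy with a multiplier $\mu^{\varepsilon,\Lambda}$) is the same as the paper's, and most of the steps are sound. However, there is one genuine gap, precisely at the point you dismiss as a ``secondary subtlety.'' When $f$ has a jump at the origin ($\gamma>0$), the relation $\psi^{\varepsilon,\Lambda}=g(\varepsilon^2\zeta)$ on $\{0<\zeta<\Lambda/\varepsilon^2\}$ \emph{cannot} be inverted on the set where $0<\varepsilon^2\zeta\le\gamma$: there $g(\varepsilon^2\zeta)=0$, so the trichotomy only tells you $\psi^{\varepsilon,\Lambda}=0$ while $\zeta$ may a priori take any value in $(0,\gamma/\varepsilon^2]$ on a set of positive measure. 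Formula \eqref{6-1} asserts $\zeta=0$ wherever $\psi^{\varepsilon,\Lambda}\le 0$, so it is \emph{not} a consequence of the trichotomy alone; the convention $g\equiv 0$ on $(-\infty,\gamma]$ is exactly what creates this non-invertibility, not what resolves it. The paper closes this gap with an extra argument: on $\{0<\zeta\le\gamma\varepsilon^{-2}\}$ one has $\psi^{\varepsilon,\Lambda}=0$ a.e.\ (their \eqref{6-4}), and since $\zeta=\mathcal{L}\psi^{\varepsilon,\Lambda}$ with $\mathcal{K}\zeta\in W^{2,p}_{\mathrm{loc}}$, the second derivatives of $\psi^{\varepsilon,\Lambda}$ vanish a.e.\ on that level set, forcing $\zeta=0$ there; hence $\textit{m}(\{0<\zeta\le\gamma\varepsilon^{-2}\})=0$. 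Without this step your proof only establishes \eqref{6-1} in the continuous case $\gamma=0$.

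Two smaller remarks. First, your derivation of $\mu^{\varepsilon,\Lambda}\ge -g(\Lambda)$ is vaguer than the paper's: they obtain it from the explicit bathtub formula for $\mu^{\varepsilon,\Lambda}$ together with the fact that $\mathcal{K}\zeta\to 0$ uniformly as $r\to 0^+$, so that the region near the axis (of infinite $\nu$-measure) forces the level in the definition of $\mu^{\varepsilon,\Lambda}$ above $-g(\Lambda)$; your version (``$\mu^{\varepsilon,\Lambda}<-g(\Lambda)$ forces $\zeta\equiv\Lambda/\varepsilon^2$ on its support'') can be patched into a correct argument via the mass constraint, but as written it does not say where the contradiction comes from. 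Second, your claim of weak-$*$ \emph{continuity} of the linear term $\int_D r^2\zeta\,d\nu$ is not immediate since $D$ is unbounded in $z$ and $r^2\chi_{_D}$ need not lie in the relevant dual space; but only lower semicontinuity of this (negatively signed) term is needed, which is what the paper uses, so this is an imprecision rather than an error.
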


\begin{proof}
We may take a sequence $\{\zeta_{j}\}\subset \mathcal{A}_{\varepsilon,\Lambda}$ such that as $j\to +\infty$
\begin{equation*}
  \begin{split}
        E_\varepsilon(\zeta_{j}) & \to \sup\{E_\varepsilon(\tilde{\zeta})~|~\tilde{\zeta}\in \mathcal{A}_{\varepsilon,\Lambda}\}, \\
        \zeta_{j} & \to \zeta\in L^{10/7}({\Pi,\nu})~~\text{weakly}.
  \end{split}
\end{equation*}
It is easily checked that $\zeta\in \mathcal{A}_{\varepsilon,\Lambda}$. Using the standard arguments (see, e.g., \cite{BB}), we may assume that $\zeta_{j}=(\zeta_{j})^*$, and hence $\zeta=\zeta^*$. By Lemma 2.12 of \cite{BB}, we first have
\begin{equation*}
      \lim_{j\to +\infty}\int_\Pi{\zeta_{j} \mathcal{K}\zeta_{j}}d\nu = \int_\Pi{\zeta \mathcal{K}\zeta}d\nu.
\end{equation*}
On the other hand, we have the lower semicontinuity of the rest of terms, namely,
\begin{equation*}
  \begin{split}
    \liminf_{j\to +\infty} \int_{\Pi}r^2\zeta_j d\nu  & \ge  \int_{\Pi}r^2\zeta d\nu, \\
    \liminf_{j\to +\infty}\int_\Pi G(\varepsilon^2 \zeta_j) d\nu   & \ge \int_\Pi G(\varepsilon^2 \zeta)d\nu.
  \end{split}
\end{equation*}
Consequently, we may conclude that $E_\varepsilon(\zeta)=\lim_{j\to +\infty}E_\varepsilon(\zeta_j)=\sup E_\varepsilon$, with $\zeta\in \mathcal{A}_{\varepsilon,\Lambda}$.

We now turn to prove $\eqref{6-1}$. Consider the family of variations of $\zeta$
\begin{equation*}
  \zeta_{(s)}=\zeta+s(\tilde{\zeta}-\zeta),\ \ \ s\in[0,1],
\end{equation*}
defined for arbitrary $\tilde{\zeta}\in \mathcal{A}_{\varepsilon,\Lambda}$. Since $\zeta$ is a maximizer, we have
\begin{equation*}
  \begin{split}
     0 & \ge \frac{d}{ds}E_\varepsilon(\zeta_{(s)})|_{s=0^+} \\
       & =\int_{\Pi}(\tilde{\zeta}-\zeta)\left(\mathcal{K}\zeta-\frac{Wr^2}{2} \log{\frac{1}{\varepsilon}}-g(\varepsilon^2 \zeta)\right)d\nu.
  \end{split}
\end{equation*}
This implies that for any  $\tilde{\zeta}\in \mathcal{A}_{\varepsilon,\Lambda}$, there holds
\begin{equation*}
  \int_{\Pi}\zeta \left(\mathcal{K}\zeta-\frac{Wr^2}{2} \log{\frac{1}{\varepsilon}}-g(\varepsilon^2 \zeta)\right)d\nu \ge \int_{\Pi}\tilde{\zeta}  \left(\mathcal{K}\zeta-\frac{Wr^2}{2} \log{\frac{1}{\varepsilon}}-g(\varepsilon^2 \zeta)\right)d\nu.
\end{equation*}
By an adaptation of the bathtub principle \cite{Lieb}, we obtain
\begin{equation}\label{6-3}
  \begin{split}
    \mathcal{K}\zeta-\frac{Wr^2}{2} \log{\frac{1}{\varepsilon}}-\mu^{\varepsilon,\Lambda}&\ge g(\varepsilon^2 \zeta) \ \ \  whenever\  \zeta=\frac{\Lambda}{\varepsilon^2}, \\
    \mathcal{K}\zeta-\frac{Wr^2}{2} \log{\frac{1}{\varepsilon}}-\mu^{\varepsilon,\Lambda} &=g(\varepsilon^2 \zeta) \ \ \  whenever\  0<\zeta<\frac{\Lambda}{\varepsilon^2}, \\
    \mathcal{K}\zeta-\frac{Wr^2}{2} \log{\frac{1}{\varepsilon}}-\mu^{\varepsilon,\Lambda} & \le g(\varepsilon^2 \zeta) \ \ \  whenever\  \zeta=0,
  \end{split}
\end{equation}
where
\begin{equation}\label{mu}
  \mu^{\varepsilon,\Lambda}=\inf\{t:|\{\mathcal{K}\zeta-\frac{Wr^2}{2} \log{\frac{1}{\varepsilon}}-g(\varepsilon^2 \zeta)>t\}|\le \frac{\kappa \varepsilon^2}{\Lambda} \}\in \mathbb{R}.
\end{equation}
It follows from \eqref{6-3} that
\begin{equation}\label{6-4}
  \mathcal{K}\zeta-\frac{Wr^2}{2} \log{\frac{1}{\varepsilon}}-\mu^{\varepsilon,\Lambda}=0\ \ \text{a.e.}\ \ \{0<\zeta\le \frac{\gamma}{\varepsilon^2}\}.
\end{equation}
Notice that
\begin{equation}\label{6-5}
  \zeta=\mathcal{L}\left(\mathcal{K}\zeta-\frac{Wr^2}{2} \log{\frac{1}{\varepsilon}}-\mu^{\varepsilon,\Lambda}\right).
\end{equation}
Combining \eqref{6-4} and \eqref{6-5}, we conclude that $\textit{m}(\{0<\zeta\le {\gamma}{\varepsilon^{-2}}\})=0$. Furthermore, by Lemma 2.8 of \cite{BB}, $\mathcal{K}\zeta$  uniformly goes to zero with respect to $z$ as $r\to 0^+$. Hence, by virtue of \eqref{mu}, we must have   $\mu^{\varepsilon,\Lambda}\ge -g(\Lambda)$. Now the stated form $\eqref{6-1}$ follows immediately.

When $\zeta\not\equiv0$ and $\mu^{\varepsilon,\Lambda}>0$, we prove $\int_D\zeta d\nu=\kappa$. Suppose not, then we may consider the family of variations of $\zeta$
\begin{equation*}
  \zeta_{(s)}=\zeta+s\phi,\ \ \ s>0,
\end{equation*}
defined for arbitrary $\phi \in L^1\cap L^{\infty}(D)$ satisfying $\phi\ge 0$ a.e. on $\{\zeta<\delta\}$ and $\phi\le 0$ a.e. on $\{\zeta>\Lambda/\varepsilon^2-\delta\}$ for some $\delta>0$. Clearly, $\zeta_{(s)}\in \mathcal{A}_{\varepsilon,\Lambda}$ for all sufficiently small $s>0$.
We then have
\begin{equation*}
  \begin{split}
     0 & \ge \frac{d}{ds}E(\zeta_{(s)})|_{s=0^+} \\
       & =\int_{\Pi}\phi\left(\mathcal{K}\zeta-\frac{Wr^2}{2} \log{\frac{1}{\beta}}-g(\varepsilon^2\zeta)\right)d\nu,
  \end{split}
\end{equation*}
which implies that
\begin{equation}\label{6-6}
  \begin{split}
    \mathcal{K}\zeta-\frac{Wr^2}{2} \log{\frac{1}{\varepsilon}}&\ge g(\varepsilon^2 \zeta) \ \ \  whenever\  \zeta=\frac{\Lambda}{\varepsilon^2}, \\
    \mathcal{K}\zeta-\frac{Wr^2}{2} \log{\frac{1}{\varepsilon}} &=g(\varepsilon^2 \zeta) \ \ \  whenever\  0<\zeta<\frac{\Lambda}{\varepsilon^2}, \\
    \mathcal{K}\zeta-\frac{Wr^2}{2} \log{\frac{1}{\varepsilon}} & \le g(\varepsilon^2 \zeta) \ \ \  whenever\  \zeta=0,
  \end{split}
\end{equation}
Combining $\eqref{6-3}$ and $\eqref{6-6}$, we deduce that
\begin{equation*}
  \{\zeta>0\}=\{\mathcal{K}\zeta-\frac{Wr^2}{2} \log{\frac{1}{\varepsilon}}>0\}=\{ \mathcal{K}\zeta-\frac{Wr^2}{2} \log{\frac{1}{\varepsilon}}>\mu^{\varepsilon, \Lambda}\},
\end{equation*}
which is clearly a contradiction. The proof is thus completed.
\end{proof}

\subsection{Limiting behavior and proof of Theorem \ref{thm}}
Let $K(r,z,r',z')$ be the Green's function of $\mathcal{L}$ in $\Pi$, with respect to zero Dirichlet data and measure $rdrdz$. One has
\begin{equation*}
   K(r,z,r',z')=\frac{rr'}{4\pi}\int_{-\pi}^{\pi}\frac{\cos\theta'd\theta'}{[(z-z')^2+r^2+r'^2-2rr'\cos\theta']^\frac{1}{2}}.
\end{equation*}
One can easily show that\,(see \cite{BB})
\begin{equation*}
  \mathcal{K}\zeta(r,z)=\int_DK(r,z,r',z')\zeta(r',z')r'dr'dz',\ \ \ \forall\,\zeta\in \mathcal{A}_{\varepsilon,\Lambda}.
\end{equation*}
Let us introduce
 \begin{equation}\label{6-7}
 \sigma=[(r-r')^2+(z-z')^2]^\frac{1}{2}/(4rr')^\frac{1}{2}.
\end{equation}
We have the following estimates about $K$, see \cite{CWZ2}.

\begin{lemma}\label{le2}
 There holds
 \begin{equation}\label{6-8}
   0<K(r,z,r',z')\leq\frac{(rr')^\frac{1}{2}}{4\pi}\sinh^{-1}(\frac{1}{\sigma}), \ \ \forall\,\sigma >0.
 \end{equation}
 Moreover, there exists a continuous function $h \in L^{\infty}(\Pi\times \Pi)$ such that
 \begin{equation}\label{6-9}
   K(r,z,r',z')=\frac{\sqrt{rr'}}{2\pi}\log{\frac{1}{\sigma}}+\frac{\sqrt{rr'}}{2\pi}\log(1+\sqrt{\sigma^2+1})+h(r,z,r',z')\sqrt{rr'},\ \ \text{in}\ \Pi\times \Pi.
 \end{equation}
\end{lemma}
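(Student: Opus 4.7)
The strategy is to reduce the $\theta'$-integral defining $K$ to a one-parameter family (parametrized by $\sigma$) whose logarithmic singularity as $\sigma\to 0^+$ is visible by inspection. First I would simplify the denominator using
\begin{equation*}
(z-z')^2+r^2+r'^2-2rr'\cos\theta' \;=\; (z-z')^2+(r-r')^2+4rr'\sin^2(\theta'/2) \;=\; 4rr'\bigl[\sigma^2+\sin^2(\theta'/2)\bigr],
\end{equation*}
which is immediate from the definition of $\sigma$. Combined with the $\theta'\mapsto -\theta'$ symmetry of the integrand and the substitution $\phi=\theta'/2$, this brings $K$ to the compact form
\begin{equation*}
K(r,z,r',z') \;=\; \frac{\sqrt{rr'}}{2\pi}\int_0^{\pi/2}\frac{\cos(2\phi)\,d\phi}{\sqrt{\sigma^2+\sin^2\phi}},
\end{equation*}
which is the common starting point for both conclusions. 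Positivity $K>0$ can be read off either from the maximum principle for the elliptic operator $\mathcal{L}$ with zero Dirichlet data, or from the asymptotic analysis below (the singular contribution is positive, and the bounded remainder does not reverse the sign).

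For the pointwise upper bound \eqref{6-8}, I would use $\cos(2\phi)=1-2\sin^2\phi\le 1$ and then compare $\int_0^{\pi/2}d\phi/\sqrt{\sigma^2+\sin^2\phi}$ to the exactly computable integral $\int_0^{\pi/2}d\phi/\sqrt{\sigma^2+(2\phi/\pi)^2}$ via the concavity inequality $\sin\phi\ge (2/\pi)\phi$ on $[0,\pi/2]$. A change of variable $u=2\phi/\pi$ reduces this to $\int_0^{1}du/\sqrt{\sigma^2+u^2}=\sinh^{-1}(1/\sigma)$, which yields the stated estimate.

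For the decomposition \eqref{6-9}, I would write $\cos(2\phi)=1-2\sin^2\phi$ and split $K$ into two integrals. The second, $\int_0^{\pi/2}\sin^2\phi\,(\sigma^2+\sin^2\phi)^{-1/2}d\phi$, has an integrand dominated by $|\sin\phi|$ and is therefore bounded uniformly in $\sigma\ge 0$, contributing only to $h\sqrt{rr'}$. In the first integral I isolate the log via
\begin{equation*}
\int_0^{\pi/2}\frac{d\phi}{\sqrt{\sigma^2+\sin^2\phi}} \;=\; \int_0^{\pi/2}\frac{d\phi}{\sqrt{\sigma^2+\phi^2}} + \int_0^{\pi/2}\!\left[\frac{1}{\sqrt{\sigma^2+\sin^2\phi}}-\frac{1}{\sqrt{\sigma^2+\phi^2}}\right]d\phi,
\end{equation*}
where a Taylor expansion gives $\phi^2-\sin^2\phi=O(\phi^4)$ and shows that the bracketed difference is bounded by a $\sigma$-uniform integrable function of $\phi$. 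The remaining explicit integral evaluates to $\sinh^{-1}(\pi/(2\sigma))$, and the identity
\begin{equation*}
\sinh^{-1}(1/\sigma)\;=\;\log(1/\sigma)+\log(1+\sqrt{\sigma^2+1})
\end{equation*}
recasts $\sinh^{-1}(\pi/(2\sigma))$ as $\sinh^{-1}(1/\sigma)$ plus a function of $\sigma$ that is bounded on $[0,\infty)$. Collecting every bounded contribution into $h$ yields the claimed decomposition; continuity of $h$ on $\Pi\times\Pi$ is inherited from the pointwise continuity of $K$ and of $\sigma$ off the diagonal.

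The main technical hurdle is establishing the uniform $L^\infty$-bound on $h$ across all of $\Pi\times\Pi$. The regime $\sigma\to 0$ is under control because all the bounded pieces produced along the reduction are continuous functions of $\sigma$ with finite limits at the origin; the delicate point is $\sigma\to\infty$, where $K/\sqrt{rr'}$ decays like $\sigma^{-3}$ while $\sinh^{-1}(1/\sigma)/(2\pi)$ only decays like $\sigma^{-1}$, so a careful asymptotic expansion of the remaining integrals (or the equivalent expansion of $K$ in terms of complete elliptic integrals with modulus $k^2=1/(1+\sigma^2)$) is required to show that the apparent $\sigma^{-1}$ tails cancel and leave $h$ uniformly bounded.
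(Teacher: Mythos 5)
First, a remark on the comparison itself: the paper contains no proof of this lemma — it is quoted verbatim from the reference [CWZ2] — so your argument has to stand on its own. Your reduction of $K$ to $\frac{\sqrt{rr'}}{2\pi}\int_0^{\pi/2}\cos(2\phi)\,(\sigma^2+\sin^2\phi)^{-1/2}\,d\phi$ is correct and is the right starting point, and your treatment of the decomposition \eqref{6-9} is essentially sound. The genuine gap is in your proof of \eqref{6-8}. The substitution $u=2\phi/\pi$ carries a Jacobian $d\phi=(\pi/2)\,du$ that you dropped: $\int_0^{\pi/2}\bigl(\sigma^2+(2\phi/\pi)^2\bigr)^{-1/2}d\phi=\frac{\pi}{2}\sinh^{-1}(1/\sigma)$, so your chain of inequalities actually yields $K\le\frac{\sqrt{rr'}}{4}\sinh^{-1}(1/\sigma)$, a factor $\pi$ weaker than the stated bound. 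Worse, no argument can produce the constant $\frac{1}{4\pi}$ as printed: by \eqref{6-9} itself, $K=\frac{\sqrt{rr'}}{2\pi}\sinh^{-1}(1/\sigma)+h\sqrt{rr'}$ with $h$ bounded, and this exceeds $\frac{\sqrt{rr'}}{4\pi}\sinh^{-1}(1/\sigma)$ once $\sigma$ is small, so the constant in \eqref{6-8} must in fact be $\frac{1}{2\pi}$. Your comparison $\sin\phi\ge(2/\pi)\phi$ is too lossy to reach even that; instead substitute $t=\sin\phi$ and use $1-2t^2\le\sqrt{1-t^2}$ on $[0,1]$ to get
\begin{equation*}
\int_0^{\pi/2}\frac{\cos(2\phi)\,d\phi}{\sqrt{\sigma^2+\sin^2\phi}}=\int_0^1\frac{1-2t^2}{\sqrt{1-t^2}}\cdot\frac{dt}{\sqrt{\sigma^2+t^2}}\le\int_0^1\frac{dt}{\sqrt{\sigma^2+t^2}}=\sinh^{-1}\Bigl(\frac{1}{\sigma}\Bigr),
\end{equation*}
which gives the sharp form $K\le\frac{\sqrt{rr'}}{2\pi}\sinh^{-1}(1/\sigma)$ consistent with \eqref{6-9}.

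Two further points. Your fallback justification of $K>0$ (``the singular contribution is positive, and the bounded remainder does not reverse the sign'') fails for $\sigma\ge1$, where $\log(1/\sigma)\le0$ and the whole explicit part of \eqref{6-9} is small; you must either carry out the maximum-principle argument for $\mathcal{L}$ honestly or verify directly that $F(\sigma):=\int_0^{\pi/2}\cos(2\phi)(\sigma^2+\sin^2\phi)^{-1/2}d\phi>0$, e.g.\ via the identity $F=k^{-1}[(2-k^2)\mathbf{K}(k)-2\mathbf{E}(k)]$ with $k^2=(1+\sigma^2)^{-1}$ and the classical inequality $(2-k^2)\mathbf{K}(k)>2\mathbf{E}(k)$ for the complete elliptic integrals. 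Conversely, the ``delicate point'' you flag at $\sigma\to\infty$ is not delicate: $h=\frac{1}{2\pi}\bigl(F(\sigma)-\sinh^{-1}(1/\sigma)\bigr)$ is a continuous function of $\sigma$ alone, with finite limit $\frac{\log 2-2}{2\pi}$ as $\sigma\to0^+$ and limit $0$ as $\sigma\to\infty$ because both terms tend to $0$ individually; no cancellation of $\sigma^{-1}$ tails is needed for an $L^\infty$ bound, only for the (unclaimed) statement that $h$ vanishes at infinity at the rate of $K$.
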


The next lemma give a lower bound of energy.
\begin{lemma}\label{le3}
  For any $a\in[b,c]\subset (0,d)$, there exists $C>0$ such that for all $\varepsilon$ sufficiently small, we have
\begin{equation*}
  E_\varepsilon(\zeta^{\varepsilon,\Lambda})\ge \left(\frac{a\kappa^2}{4\pi}-\frac{\kappa W a^2}{2}\right)\log\frac{1}{\varepsilon}-C,
\end{equation*}
where the positive number $C$ depends only on $b$, $c$, but not on $\varepsilon$, $\Lambda$.
\end{lemma}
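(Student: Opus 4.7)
The plan is to exploit the maximality of $\zeta^{\varepsilon,\Lambda}$ by producing an admissible competitor whose energy already meets the claimed lower bound. Motivated by the heuristic that the limiting vortex filament sits at radius $a$, I would take
\begin{equation*}
\zeta_0 := \frac{\kappa}{\pi a \varepsilon^2}\chi_{B_\varepsilon(a,0)}.
\end{equation*}
Since $\int_{B_\varepsilon(a,0)} r\,dm = \pi a\varepsilon^2$, this has total mass $\int_\Pi\zeta_0\,d\nu=\kappa$, and for $\varepsilon<d-c$ the disk is contained in $D$. The ceiling bound $\zeta_0\le\Lambda/\varepsilon^2$ holds uniformly in $a\in[b,c]$ provided $\Lambda\ge\kappa/(\pi b)$, which can be imposed at the outset; the constant $C$ we will extract is independent of $\Lambda$ in any case.

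The two easy terms of $E_\varepsilon(\zeta_0)$ are handled by direct computation. Since $\varepsilon^2\zeta_0\equiv\kappa/(\pi a)$ on its support,
\begin{equation*}
\frac{1}{\varepsilon^2}\int_D G(\varepsilon^2\zeta_0)\,d\nu = \pi a\, G\Bigl(\frac{\kappa}{\pi a}\Bigr)
\end{equation*}
is bounded uniformly in $a\in[b,c]$ by continuity of $G$, and using $\int_{B_\varepsilon(a,0)}r^2\,dm=\pi a^2\varepsilon^2+\pi\varepsilon^4/4$ the linear term evaluates to
\begin{equation*}
\frac{W}{2}\log\frac{1}{\varepsilon}\int_D r^2\zeta_0\,d\nu = \frac{W\kappa a^2}{2}\log\frac{1}{\varepsilon}+O\bigl(\varepsilon^2\log(1/\varepsilon)\bigr).
\end{equation*}

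The content of the proof lies in estimating the kinetic energy $\frac{1}{2}\int\zeta_0\mathcal{K}\zeta_0\,d\nu$, and this is where Lemma~\ref{le2} is essential. Writing
\begin{equation*}
K(r,z,r',z')=\frac{\sqrt{rr'}}{2\pi}\log\frac{1}{\sigma}+R(r,z,r',z'),
\end{equation*}
the remainder $R$ is bounded on $\overline{B_\varepsilon(a,0)}\times\overline{B_\varepsilon(a,0)}$ uniformly in $a\in[b,c]$ (because $\sqrt{\sigma^2+1}$ is bounded and $h$ is globally bounded), so it contributes at most $\tfrac{1}{2}\|R\|_\infty\bigl(\int\zeta_0\,d\nu\bigr)^2=O(\kappa^2)$. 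For the singular part, split $\log(1/\sigma)=\tfrac{1}{2}\log(4rr')-\log|(r,z)-(r',z')|$; the first summand is bounded and contributes $O(1)$, while rescaling $y=(a,0)+\varepsilon\tilde y$ yields
\begin{equation*}
\int_{B_\varepsilon(a,0)}\int_{B_\varepsilon(a,0)}\log\frac{1}{|y-y'|}\,dm(y)\,dm(y')=\pi^2\varepsilon^4\log\frac{1}{\varepsilon}+O(\varepsilon^4).
\end{equation*}
Replacing $(rr')^{3/2}$ by $a^3+O(\varepsilon)$ on the support and multiplying by $\tfrac{1}{2}\bigl(\kappa/(\pi a\varepsilon^2)\bigr)^2$ collapses this to $\frac{a\kappa^2}{4\pi}\log(1/\varepsilon)+O(1)$.

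Assembling the three contributions gives $E_\varepsilon(\zeta_0)\ge\bigl(\frac{a\kappa^2}{4\pi}-\frac{W\kappa a^2}{2}\bigr)\log(1/\varepsilon)-C$ with $C=C(b,c)$, and the lemma then follows from $E_\varepsilon(\zeta^{\varepsilon,\Lambda})\ge E_\varepsilon(\zeta_0)$. The only genuine difficulty is the bookkeeping in the kinetic term—separating the strictly logarithmic contribution of $K$ from bounded remainders and tracking the error under the rescaling—but Lemma~\ref{le2} makes this routine because the non-logarithmic part of $K$ is controlled by a globally bounded function.
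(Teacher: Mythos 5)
Your proposal is correct and follows essentially the same route as the paper: both insert a mass-$\kappa$, indicator-type test function concentrated near $(a,0)$ into the maximality inequality $E_\varepsilon(\zeta^{\varepsilon,\Lambda})\ge E_\varepsilon(\tilde{\zeta})$ and extract the leading $\log(1/\varepsilon)$ coefficients $\frac{a\kappa^2}{4\pi}$ and $\frac{\kappa Wa^2}{2}$ from the logarithmic expansion of $K$ in Lemma~\ref{le2}. The only substantive difference is the normalization of the competitor: the paper takes density exactly $1/\varepsilon^2$ on a ball of radius $\varepsilon\sqrt{\kappa/(a\pi)}$, which lies in $\mathcal{A}_{\varepsilon,\Lambda}$ for every admissible $\Lambda>\gamma+1>1$, whereas your $\zeta_0$ has density $\kappa/(\pi a\varepsilon^2)$ and is admissible only under the extra hypothesis $\Lambda\ge\kappa/(\pi b)$, so as written your argument proves the lemma only for that restricted range of $\Lambda$ rather than uniformly over all $\Lambda>\gamma+1$ as stated; this is harmless for the paper's later applications (which only need large $\Lambda$) and is fixed by adjusting the radius instead of the density.
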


\begin{proof}
Choose a test function $\tilde{\zeta}^{\varepsilon,\Lambda} \in \mathcal{A}_{\beta,\Lambda}$ defined by $\tilde{\zeta}^{\varepsilon,\Lambda}=\frac{1}{\varepsilon^2} \chi_{_{B_{{\varepsilon}{\sqrt{\kappa /(a\pi)}}}\left((a,0)\right)}}$. Since $\zeta^{\varepsilon,\Lambda}$ is a maximizer, we have $E_\varepsilon(\zeta^{\varepsilon,\Lambda})\ge E_\varepsilon(\tilde{\zeta}^{\varepsilon,\Lambda})$. By Lemma $\ref{le2}$, we obtain
 \begin{equation*}
\begin{split}
    E_\varepsilon(\tilde{\zeta}^{\varepsilon,\Lambda})&= \frac{1}{2}\int_D\int_D\tilde{\zeta}^{\varepsilon,\Lambda}(r,z)K(r,z,r',z')\tilde{\zeta}^{\varepsilon,\Lambda}(r',z')r'rdr'dz'drdz-{\frac{W}{2} \log{\frac{1}{\varepsilon}}}\int_{D}r^2\tilde{\zeta}^{\varepsilon,\Lambda}d\nu\\
                    &\ \ \ \ \ -\frac{1}{\varepsilon^2}\int_D G(\varepsilon^2\tilde{\zeta}^{\varepsilon,\Lambda})d\nu\\
                    &\ge \frac{a+O(\beta)}{4\pi}\int_D\int_D\log{\frac{1}{[(r-r')^2+(z-z')^2]^{1/2}}}\tilde{\zeta}^{\varepsilon,\Lambda}(r,z)\tilde{\zeta}^{\varepsilon,\Lambda}(r',z')r'rdr'dz'drdz\\
                    &\ \ \ \ \ -\frac{\kappa W[a+O(\varepsilon)]^2}{2}\log{\frac{1}{\varepsilon}}-C_1\\
                    &\ge \left(\frac{a\kappa^2}{4\pi}-\frac{\kappa W a^2}{2}\right)\log{\frac{1}{\varepsilon}}-C_2.
\end{split}
\end{equation*}
We therefore complete the proof.
\end{proof}

We now turn to estimate the Lagrange multiplier $\mu^{\varepsilon,\Lambda}$.
\begin{lemma}\label{le4}
Let $\delta_1$ be as in  $(\mbox{f}_2)'$, we have
\[\mu^{\varepsilon,\Lambda}\ge 2\kappa^{-1}E_\varepsilon(\zeta^{\varepsilon,\Lambda})- |1-2\delta_1|g(\Lambda)+ \frac{\kappa^{-1}W}{2}\log\frac{1}{\varepsilon}\int_\Pi r^2 \zeta^{\varepsilon,\Lambda} d\nu-C,\]
\end{lemma}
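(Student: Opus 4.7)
The plan is to test the Euler--Lagrange characterization \eqref{6-3} against $\zeta^{\varepsilon,\Lambda}$ itself, rewrite the resulting identity using the definition of $E_\varepsilon$, control the $G$-versus-$g$ discrepancy by $(\text{f}_2)'$, and dispose of the ``saturated'' set $\{\zeta^{\varepsilon,\Lambda}=\Lambda/\varepsilon^2\}$ via an elliptic estimate.

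Multiplying the three cases in \eqref{6-3} by $\zeta^{\varepsilon,\Lambda}\ge 0$ and integrating over $\Pi$, the middle case contributes equality, the top case contributes the non-negative excess $B_{\varepsilon,\Lambda}:=\int_{\{\zeta^{\varepsilon,\Lambda}=\Lambda/\varepsilon^2\}}\zeta^{\varepsilon,\Lambda}(\psi^{\varepsilon,\Lambda}-g(\Lambda))\,d\nu\ge 0$, and the bottom case vanishes. Using $\int\zeta^{\varepsilon,\Lambda}\,d\nu=\kappa$ (which holds whenever $\mu^{\varepsilon,\Lambda}>0$, the only regime in which the claim needs work), I obtain
\[
  \mu^{\varepsilon,\Lambda}\kappa \;=\; \int\zeta^{\varepsilon,\Lambda}\mathcal{K}\zeta^{\varepsilon,\Lambda}\,d\nu-\tfrac{W}{2}\log\tfrac{1}{\varepsilon}\!\int r^2\zeta^{\varepsilon,\Lambda}\,d\nu-\!\int\zeta^{\varepsilon,\Lambda}g(\varepsilon^2\zeta^{\varepsilon,\Lambda})\,d\nu-B_{\varepsilon,\Lambda}.
\]
Substituting the algebraic identity $\int\zeta\mathcal{K}\zeta=2E_\varepsilon(\zeta)+W\log\tfrac{1}{\varepsilon}\int r^2\zeta\,d\nu+\tfrac{2}{\varepsilon^2}\int G(\varepsilon^2\zeta)\,d\nu$ recasts this as
\[
  \mu^{\varepsilon,\Lambda}\kappa \;=\; 2E_\varepsilon(\zeta^{\varepsilon,\Lambda})+\tfrac{W}{2}\log\tfrac{1}{\varepsilon}\!\int r^2\zeta^{\varepsilon,\Lambda}\,d\nu+\Bigl[\tfrac{2}{\varepsilon^2}\!\int G\,d\nu-\!\int\zeta^{\varepsilon,\Lambda}g\,d\nu\Bigr]-B_{\varepsilon,\Lambda}.
\]

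The bracketed discrepancy is controlled by $(\text{f}_2)'$: since $G(s)\ge\delta_1 s g(s)$ gives $\tfrac{2}{\varepsilon^2}\int G\,d\nu\ge 2\delta_1\int\zeta g\,d\nu$, and since the pointwise bound $g(\varepsilon^2\zeta)\le g(\Lambda)$ gives $\int\zeta g\,d\nu\le g(\Lambda)\kappa$, a short case-split on the sign of $1-2\delta_1$ yields
\[
  \tfrac{2}{\varepsilon^2}\!\int G(\varepsilon^2\zeta^{\varepsilon,\Lambda})\,d\nu-\!\int\zeta^{\varepsilon,\Lambda}g(\varepsilon^2\zeta^{\varepsilon,\Lambda})\,d\nu \;\ge\; -|1-2\delta_1|\,g(\Lambda)\,\kappa.
\]

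The crux --- and the main obstacle --- is obtaining $B_{\varepsilon,\Lambda}\le C\kappa^2$ with $C$ independent of $\varepsilon,\Lambda$. The saturated set $A_2:=\{\zeta^{\varepsilon,\Lambda}=\Lambda/\varepsilon^2\}\subset D$ satisfies $\nu(A_2)\le\kappa\varepsilon^2/\Lambda$, and by Steiner symmetry together with the uniform decay $\mathcal{K}\zeta(r,z)\to 0$ as $r\to 0^+$ (Lemma 2.8 of \cite{BB}), it is bounded away from the axis $\{r=0\}$; hence $m(A_2)\lesssim\kappa\varepsilon^2/\Lambda$. Because $\mathcal{L}(r^2)=0$ and $\mathcal{L}$ annihilates constants, the function $u:=(\psi^{\varepsilon,\Lambda}-g(\Lambda))_+$ vanishes outside $A_2$ and solves $\mathcal{L}u=\Lambda/\varepsilon^2$ inside $A_2$ with $u=0$ on $\partial A_2$. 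An Alexandrov--Bakelman--Pucci / Talenti-type comparison, using the uniform comparability of $\mathcal{L}$ to $-r^{-2}\Delta$ on the bounded subregion of $\Pi$ where $r$ stays in a fixed compact subinterval of $(0,\infty)$, then produces $\sup_{A_2}u\le C(\Lambda/\varepsilon^2)\,m(A_2)\le C\kappa$, whence $B_{\varepsilon,\Lambda}\le\sup_{A_2}u\cdot\int\zeta^{\varepsilon,\Lambda}\,d\nu\le C\kappa^2$. Assembling the three ingredients and dividing by $\kappa$ delivers the stated lower bound after absorbing the $\kappa$-weight into the final constant. The algebraic manipulations using $(\text{f}_2)'$ and the expansion of $E_\varepsilon$ are routine; the genuine difficulty is the uniform bound on the saturation-excess term $B_{\varepsilon,\Lambda}$.
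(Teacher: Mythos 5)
Your reduction of the lemma to the single estimate $B_{\varepsilon,\Lambda}=\int_\Pi\zeta^{\varepsilon,\Lambda}\,[\psi^{\varepsilon,\Lambda}-g(\Lambda)]_+\,d\nu\le C$ is exactly the paper's reduction: your identity obtained by testing \eqref{6-3} against $\zeta^{\varepsilon,\Lambda}$, the use of $(\text{f}_2)'$, and the resulting $-|1-2\delta_1|\kappa g(\Lambda)$ term all reproduce \eqref{6-10}. The gap is in your treatment of $B_{\varepsilon,\Lambda}$. The torsion/Talenti comparison you invoke gives $\sup u\le C(\Lambda/\varepsilon^2)\,m(\{u>0\})$, so everything hinges on $m(A_2)\le C\kappa\varepsilon^2/\Lambda$. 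But the mass constraint only yields $\nu(A_2)\le\kappa\varepsilon^2/\Lambda$, and passing from $\nu$-measure to Lebesgue measure requires $r$ to be bounded below on $A_2$ \emph{uniformly in $\varepsilon$ and $\Lambda$}. Your justification (Steiner symmetry plus the decay of $\mathcal{K}\zeta$ near the axis) does not deliver this: on $A_2$ one only knows $\mathcal{K}\zeta^{\varepsilon,\Lambda}=\psi^{\varepsilon,\Lambda}+\tfrac{Wr^2}{2}\log\tfrac1\varepsilon+\mu^{\varepsilon,\Lambda}\ge g(\Lambda)+\mu^{\varepsilon,\Lambda}\ge 0$, since a priori $\mu^{\varepsilon,\Lambda}$ can be as small as $-g(\Lambda)$; the positivity $\mu^{\varepsilon,\Lambda}>0$ is only obtained in Corollary \ref{le5} \emph{as a consequence of this lemma}, so it cannot be used here. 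A thin saturated set hugging the axis is compatible with $\nu(A_2)\le\kappa\varepsilon^2/\Lambda$ while $m(A_2)$ is arbitrarily larger, and the decay bound from Lemma 2.8 of \cite{BB} is in any case not uniform in $\Lambda/\varepsilon^2=\|\zeta^{\varepsilon,\Lambda}\|_\infty$. (A secondary point: the plain ABP inequality $\sup u\le C\,\mathrm{diam}\,\|f\|_{L^2}$ only gives $O(\sqrt{\Lambda}/\varepsilon)$ here, so you genuinely need the measure-based torsion comparison for the divergence-form operator $-\nabla\cdot(r^{-1}\nabla\,\cdot\,)$, not ABP.)

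The paper sidesteps this entirely by estimating $B_{\varepsilon,\Lambda}$ in the weighted spaces rather than through an $L^\infty$ bound. Setting $U=[\psi^{\varepsilon,\Lambda}-g(\Lambda)]_+$ and $X=\int_\Pi\zeta^{\varepsilon,\Lambda}U\,d\nu$, integration by parts gives $\int_\Pi r^{-2}|\nabla U|^2\,d\nu=X$; then Cauchy--Schwarz in $d\nu$, the embedding $W^{1,1}(\mathbb{R}^2)\hookrightarrow L^2(\mathbb{R}^2)$, and the weighted Cauchy--Schwarz $\int_D|\nabla U|\,d\textit{m}\le\nu(\{U>0\})^{1/2}\bigl(\int_\Pi r^{-2}|\nabla U|^2\,d\nu\bigr)^{1/2}$ combine to give $X\le C\Lambda\varepsilon^{-2}\nu(A_2)\,X^{1/2}\le C\kappa X^{1/2}$, hence $X\le C$ --- using only the $\nu$-measure of the saturated set, which the constraint $\int\zeta^{\varepsilon,\Lambda}d\nu\le\kappa$ controls for free. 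To repair your argument you would need either a symmetrization inequality adapted to the measure $d\nu$ or an independent proof that $\mathrm{supp}\,\zeta^{\varepsilon,\Lambda}$ stays uniformly away from the axis; absent that, the energy/Sobolev route is the one that closes.
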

where the positive number $C$ does not depend on $\varepsilon$, $\Lambda$.
\begin{proof}
  Recalling \eqref{6-3} and the assumption $(\text{f}_2)'$, we have
  \begin{equation}\label{6-10}
    \begin{split}
       2E_\varepsilon(\zeta^{\varepsilon,\Lambda}) & =\int_\Pi \zeta^{\varepsilon,\Lambda} \mathcal{K}\zeta^{\varepsilon,\Lambda} d\nu -W\log\frac{1}{\varepsilon}\int_\Pi r^2 \zeta^{\varepsilon,\Lambda} d\nu -\frac{2}{\varepsilon^2}\int_\Pi G(\varepsilon^2\zeta^{\varepsilon,\Lambda})d\nu \\
         & \le \int_\Pi \zeta^{\varepsilon,\Lambda} \psi^{\varepsilon,\Lambda} d\nu -\frac{W}{2}\log \frac{1}{\varepsilon}\int_\Pi r^2 \zeta^{\varepsilon,\Lambda} d\nu -2\delta_1 \int_\Pi \zeta^{\varepsilon,\Lambda}g(\varepsilon^2\zeta^{\varepsilon,\Lambda})d\nu +\kappa \mu^{\varepsilon,\Lambda} \\
         & =(1-2\delta_1)\int_\Pi \zeta^{\varepsilon,\Lambda}g(\varepsilon^2\zeta^{\varepsilon,\Lambda})d\nu+\int_\Pi \zeta^{\varepsilon,\Lambda}[\psi^{\varepsilon,\Lambda}-g(\Lambda)]_+d\nu\\
         &\ \ \ \ \ \ \ \ -\frac{W}{2}\log \frac{1}{\varepsilon}\int_\Pi r^2 \zeta^{\varepsilon,\Lambda} d\nu+\kappa\mu^{\varepsilon,\Lambda} \\
         & \le |1-2\delta_1|\kappa g(\Lambda) +\int_\Pi \zeta^{\varepsilon,\Lambda}[\psi^{\varepsilon,\Lambda}-g(\Lambda)]_+d\nu-\frac{W}{2}\log \frac{1}{\varepsilon}\int_\Pi r^2 \zeta^{\varepsilon,\Lambda} d\nu+\kappa\mu^{\varepsilon,\Lambda}.
    \end{split}
  \end{equation}
   Set $U^{\varepsilon,\Lambda}:=[\psi^{\varepsilon,\Lambda}-g(\Lambda)]_+$. By integration by parts, one has
\begin{equation}\label{6-11}
 \int_\Pi {\frac{|\nabla U^{\varepsilon,\Lambda}|^2}{r^2}}d\nu= \int_\Pi \zeta^{\varepsilon,\Lambda}U^{\varepsilon,\Lambda}d\nu.
\end{equation}
On the other hand, by H\"older's inequality and Sobolev embedding
\begin{equation}\label{6-12}
\begin{split}
  \int_\Pi \zeta^{\varepsilon,\Lambda}U^{\varepsilon,\Lambda}d\nu
     & \le \frac{\Lambda}{\varepsilon^2}|\{\zeta^{\varepsilon,\Lambda}={\Lambda}{\varepsilon^{-2}}\}|^{\frac{1}{2}}\left(\int_D |U^{\varepsilon,\Lambda}|^2d\nu\right)^{\frac{1}{2}}\\
     &\le \frac{d\Lambda}{\varepsilon^2}|\{\zeta^{\varepsilon,\Lambda}={\Lambda}{\varepsilon^{-2}}\}|^{\frac{1}{2}}\left(\int_D |U^{\varepsilon,\Lambda}|^2d\textit{m}\right)^{\frac{1}{2}}\\
     & \le \frac{Cd\Lambda }{\varepsilon^2}|\{\zeta^{\varepsilon,\Lambda}={\Lambda}{\varepsilon^{-2}}\}|^{\frac{1}{2}}\int_D |\nabla U^{\varepsilon,\Lambda}|d\textit{m} \\
     & \le Cd\left(\int_\Pi{\frac{|\nabla U^{\varepsilon,\Lambda}|^2}{r^2}}d\nu\right)^{\frac{1}{2}}.
\end{split}
\end{equation}
Here the positive constant $C$  does not depend on $\varepsilon$ and $\Lambda$. Combining \eqref{6-11} and \eqref{6-12}, we conclude that $\int_\Pi \zeta^{\varepsilon,\Lambda}U^{\varepsilon,\Lambda}d\nu $ is uniformly bounded with respect to $\varepsilon$ and $\Lambda$. Now the desired result clearly follows from \eqref{6-10}.
\end{proof}

\begin{corollary}\label{le5}
If $\varepsilon$ is small enough, then $\int_\Pi \zeta^{\varepsilon,\Lambda}d\nu=\kappa$.
\end{corollary}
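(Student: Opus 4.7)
The strategy is to combine the last statement of Lemma \ref{le1} with the energy lower bound (Lemma \ref{le3}) and the Lagrange multiplier lower bound (Lemma \ref{le4}). Lemma \ref{le1} tells us that whenever $\zeta^{\varepsilon,\Lambda}\not\equiv 0$ and $\mu^{\varepsilon,\Lambda}>0$, we automatically get $\int_\Pi\zeta^{\varepsilon,\Lambda}\,d\nu=\kappa$. So it suffices to verify both conditions for $\varepsilon$ small enough.

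First I would optimize Lemma \ref{le3} in $a$. The function $a\mapsto\frac{a\kappa^{2}}{4\pi}-\frac{\kappa W a^{2}}{2}$ attains its unique positive maximum at $a_{*}=\frac{\kappa}{4\pi W}$, with value $\frac{\kappa^{3}}{32\pi^{2}W}>0$. Since $d=\frac{\kappa}{4\pi W}+1>a_{*}$, one can choose $[b,c]\subset(0,d)$ containing $a_{*}$, and Lemma \ref{le3} yields, for all small enough $\varepsilon$,
\[
E_{\varepsilon}(\zeta^{\varepsilon,\Lambda})\ \ge\ \frac{\kappa^{3}}{32\pi^{2}W}\log\frac{1}{\varepsilon}-C_{0},
\]
with $C_{0}$ independent of $\varepsilon$ and $\Lambda$. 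Because $E_{\varepsilon}(0)=0$, this already forces $\zeta^{\varepsilon,\Lambda}\not\equiv 0$ once $\varepsilon$ is small enough.

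Next I would plug this lower bound into Lemma \ref{le4}. Since $\int_{\Pi}r^{2}\zeta^{\varepsilon,\Lambda}\,d\nu\ge 0$, the term $\frac{\kappa^{-1}W}{2}\log\frac{1}{\varepsilon}\int_{\Pi}r^{2}\zeta^{\varepsilon,\Lambda}d\nu$ is nonnegative and may simply be discarded. This gives
\[
\mu^{\varepsilon,\Lambda}\ \ge\ \frac{\kappa^{2}}{16\pi^{2}W}\log\frac{1}{\varepsilon}\ -\ |1-2\delta_{1}|\,g(\Lambda)\ -\ C_{1},
\]
with $C_{1}$ independent of $\varepsilon,\Lambda$. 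The parameter $\Lambda$ (chosen once and for all in the definition of $\mathcal{A}_{\varepsilon,\Lambda}$) is a fixed constant, so $g(\Lambda)$ is a fixed constant as well. Hence the right-hand side tends to $+\infty$ as $\varepsilon\to 0^{+}$, and in particular $\mu^{\varepsilon,\Lambda}>0$ for all sufficiently small $\varepsilon$.

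Having produced $\zeta^{\varepsilon,\Lambda}\not\equiv 0$ and $\mu^{\varepsilon,\Lambda}>0$, the last sentence of Lemma \ref{le1} delivers $\int_{\Pi}\zeta^{\varepsilon,\Lambda}d\nu=\kappa$, completing the proof. There is no real obstacle here; the only thing to double check is that the constants $C_{0},C_{1}$ from Lemmas \ref{le3}--\ref{le4} truly are $\Lambda$-independent (they are, by inspection of those statements), so that the growing $\log\frac{1}{\varepsilon}$ term is not swallowed up by hidden dependence on the cutoff parameter.
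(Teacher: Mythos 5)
Your proposal is correct and is exactly the argument the paper intends (the corollary is stated without proof precisely because it follows from Lemma \ref{le1} combined with Lemmas \ref{le3} and \ref{le4} in the way you describe, optimizing at $a_*=\kappa/(4\pi W)$). The only cosmetic caveat is that at this point in the paper $\Lambda$ has not yet been fixed to $\Lambda_0$, so strictly speaking your threshold on $\varepsilon$ depends on the (arbitrary but fixed) $\Lambda$ through $g(\Lambda)$ — which is all the corollary claims, and is how it is used later.
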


We can repeat the arguments in \cite{CWZ, CWZ2} to obtain the following asymptotics.
\begin{lemma}\label{le6}
There holds
\begin{equation*}
diam[\text{supp}(\zeta^{\varepsilon,\Lambda})] \le 4d \varepsilon^{{1}/{2}}
\end{equation*}
provided $\varepsilon$ is small enough. Moreover, we have
\begin{equation*}
  \lim_{\varepsilon \to 0^+}dist\left( \text{supp}(\zeta^{\varepsilon,\Lambda}), (\frac{\kappa}{4\pi W},0) \right)=0.
\end{equation*}
\end{lemma}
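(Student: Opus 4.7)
The plan follows the concentration strategy of Turkington \cite{Tur83} as refined for vortex rings in \cite{CWZ, CWZ2}. First, I would insert the optimal choice $a = \kappa/(4\pi W)$ (which maximizes $a\kappa^2/(4\pi) - \kappa W a^2/2$ at the value $\kappa^3/(32\pi^2W)$) into Lemma \ref{le3} and feed the result into Lemma \ref{le4}, obtaining a sharp lower bound on the Lagrange multiplier,
\begin{equation*}
\mu^{\varepsilon,\Lambda} \ge \frac{\kappa^2}{16\pi^2 W}\log\frac{1}{\varepsilon} - C,
\end{equation*}
with $C$ independent of $\varepsilon$ and $\Lambda$. Since $\psi^{\varepsilon,\Lambda}\ge 0$ on $\text{supp}(\zeta^{\varepsilon,\Lambda})$ and $r\le d$ there, the representation \eqref{6-2} yields a pointwise bound $\mathcal{K}\zeta^{\varepsilon,\Lambda}(x) \ge c_0\log(1/\varepsilon) - C$ on the support, and Lemma \ref{le2} (using $\sqrt{rr'}\le d$) converts this into a lower bound on the logarithmic potential,
\begin{equation*}
\int_D \log\frac{1}{|x-y|}\,\zeta^{\varepsilon,\Lambda}(y)\,d\nu(y) \ge c_1\log\frac{1}{\varepsilon} - C, \quad \forall\,x\in\text{supp}(\zeta^{\varepsilon,\Lambda}).
\end{equation*}

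To establish the diameter bound, I would argue by contradiction: assume $x_1, x_2 \in \text{supp}(\zeta^{\varepsilon,\Lambda})$ exist with $|x_1 - x_2| > 4d\varepsilon^{1/2}$, and split the support along the perpendicular bisector of $x_1 x_2$ into $S_1, S_2$. For $y \in S_{3-i}$ we have $|x_i - y|\ge 2d\varepsilon^{1/2}$, so the far-piece of the logarithmic potential at $x_i$ is at most $\tfrac12\log(1/\varepsilon)\cdot(\kappa-\kappa_i)+O(1)$, where $\kappa_i = \int_{S_i}\zeta^{\varepsilon,\Lambda}\,d\nu$. The near-piece is bounded by radial rearrangement against the constraint $\zeta^{\varepsilon,\Lambda}\le \Lambda/\varepsilon^2$, giving $\kappa_i\log(1/\varepsilon)+O(1)$. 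Summing over $i=1,2$ and comparing with the doubled lower bound produces the contradiction once the constants are tracked carefully.

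Finally, to localize the concentration at $(\kappa/(4\pi W), 0)$, I would apply Lemma \ref{le3} with $a$ equal to the $r$-barycenter $r_c^\varepsilon = \kappa^{-1}\int r\,\zeta^{\varepsilon,\Lambda}\,d\nu$ and compare with a matching energy upper bound derived from Lemma \ref{le2} together with the diameter bound just established. This forces $r_c^\varepsilon \to \kappa/(4\pi W)$. The Steiner symmetry in $z$ pins the $z$-barycenter to $0$, and combined with the $O(\varepsilon^{1/2})$ diameter the distance from the support to $(\kappa/(4\pi W), 0)$ tends to $0$.

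\textbf{The main obstacle} is the sharpness of the contradiction in the diameter step: the constants coming from the near-piece rearrangement and the far-piece truncation must combine to strictly exceed the doubled potential lower bound, which requires tight control of $c_0, c_1$ in terms of $\kappa, W$, and $d = \kappa/(4\pi W) + 1$. This bookkeeping is the technical heart of the arguments adapted from \cite{CWZ, CWZ2}.
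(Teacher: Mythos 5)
The paper offers no proof of this lemma; it is imported from \cite{CWZ,CWZ2}, so your proposal can only be measured against the arguments there. Your overall strategy (Turkington-type concentration: a lower bound on $\mu^{\varepsilon,\Lambda}$, hence on the potential over the support, then a splitting of the logarithmic potential, and finally a barycenter/energy-pinching argument for the location) is the right one, and the last step is sound in outline. But there is a genuine quantitative gap in the diameter step, and it is not mere bookkeeping.

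Track your constants. With $r_*=\kappa/(4\pi W)$, inserting $a=r_*$ into Lemma \ref{le3} and then into Lemma \ref{le4} gives $\mu^{\varepsilon,\Lambda}\ge \frac{\kappa^2}{16\pi^2W}\log\frac{1}{\varepsilon}-C=\frac{\kappa r_*}{4\pi}\log\frac{1}{\varepsilon}-C$, so $c_0=\frac{\kappa r_*}{4\pi}$ once you discard, as you do, both the term $\frac{\kappa^{-1}W}{2}\log\frac{1}{\varepsilon}\int_\Pi r^2\zeta^{\varepsilon,\Lambda}d\nu$ in Lemma \ref{le4} and the term $\frac{Wr^2}{2}\log\frac{1}{\varepsilon}$ in \eqref{6-2}. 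Dividing by the kernel factor $\frac{\sqrt{rr'}}{2\pi}\le\frac{d}{2\pi}$ then gives $c_1\le\frac{2\pi}{d}c_0=\frac{\kappa}{2}\cdot\frac{r_*}{r_*+1}<\frac{\kappa}{2}$. Your bisector splitting bounds the potential at $x_i$ by $\frac{\kappa+\kappa_i}{2}\log\frac{1}{\varepsilon}+O(1)$, i.e. $\kappa_i\ge 2c_1-\kappa-o(1)$, and summing over $i=1,2$ contradicts $\kappa_1+\kappa_2=\kappa$ only if $c_1>\frac{3}{4}\kappa$. So with your constants the argument proves nothing. The sharp value is $c_1=\kappa-o(1)$, and reaching it requires exactly the ingredients you drop: (i) the two $Wr^2$-terms above, each worth $\frac{Wr_*^2}{2}\log\frac{1}{\varepsilon}=\frac{\kappa r_*}{8\pi}\log\frac{1}{\varepsilon}$, which lift $c_0$ from $\frac{\kappa r_*}{4\pi}$ to $\frac{\kappa r_*}{2\pi}$ (consistent with \eqref{220}); and (ii) an a priori radial localization $r\to r_*$ on $\text{supp}(\zeta^{\varepsilon,\Lambda})$, needed both to evaluate $\int_\Pi r^2\zeta^{\varepsilon,\Lambda}d\nu\approx\kappa r_*^2$ and to replace $\sqrt{rr'}\le d$ by $\sqrt{rr'}\le r_*+o(1)$. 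That radial localization is obtained from the energy-pinching argument you invoke only at the very end: the upper bound $\frac12\int_\Pi\zeta^{\varepsilon,\Lambda}\mathcal{K}\zeta^{\varepsilon,\Lambda}d\nu\le\frac{\kappa}{4\pi}\log\frac{1}{\varepsilon}\int_\Pi r\zeta^{\varepsilon,\Lambda}d\nu+C$ follows from Lemma \ref{le2} and rearrangement with no diameter information, and together with Lemma \ref{le3} it pins $\int_\Pi\bigl(\frac{\kappa r}{4\pi}-\frac{Wr^2}{2}\bigr)\zeta^{\varepsilon,\Lambda}d\nu$ at its maximum, forcing $r$ to concentrate at $r_*$. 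The logical order must therefore be reversed: radial localization first, then the diameter bound. As written, your proof of the first assertion does not close.
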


Recalling \eqref{6-1}, $\zeta^{\varepsilon,\Lambda}$ has an extra patch part. We now eliminate that patch part by selecting sufficiently large $\Lambda$. To this end, we first show that $\psi^{\varepsilon,\Lambda}$ has a prior upper bound with respect to $\Lambda$.
\begin{lemma}\label{le7}
We have
\begin{equation*}
  \psi^{\varepsilon,\Lambda}(r,z)\le |1-2\delta_1|g(\Lambda)+ C \log \Lambda +C, \ \ \forall~ (r,z)\in D,
\end{equation*}
where the positive constant $C$ is independent of $\varepsilon$, $\Lambda$.
\end{lemma}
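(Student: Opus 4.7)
The strategy is to split into cases based on whether $\zeta^{\varepsilon,\Lambda}$ vanishes, and to bound $\psi^{\varepsilon,\Lambda}$ on the support of $\zeta^{\varepsilon,\Lambda}$ by combining a bathtub/rearrangement bound for $\mathcal{K}\zeta^{\varepsilon,\Lambda}$ with the Lagrange multiplier estimate of Lemma \ref{le4} and the energy lower bound of Lemma \ref{le3}. I would first observe that on $\{\zeta^{\varepsilon,\Lambda}=0\}$ the third line of \eqref{6-3} yields $\psi^{\varepsilon,\Lambda}\le g(0)=0$, so the claimed bound is trivial there. Consequently, it suffices to estimate $\psi^{\varepsilon,\Lambda}$ on $\mathrm{supp}(\zeta^{\varepsilon,\Lambda})$, which by Lemma \ref{le6} is concentrated in a small neighbourhood of $(r_*,0)$, where $r_*:=\kappa/(4\pi W)$.

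For $\mathcal{K}\zeta^{\varepsilon,\Lambda}(r,z)$, using $0\le\zeta^{\varepsilon,\Lambda}\le\Lambda/\varepsilon^2$ together with $\int\zeta^{\varepsilon,\Lambda}d\nu=\kappa$ (Corollary \ref{le5}), the bathtub principle gives
\[
\mathcal{K}\zeta^{\varepsilon,\Lambda}(r,z)\le\frac{\Lambda}{\varepsilon^2}\int_{B_{\rho}((r,z))} K((r,z),(r',z'))\,r'\,dr'dz',
\]
where $\rho$ is chosen so that $\nu(B_{\rho}((r,z)))=\kappa\varepsilon^2/\Lambda$, hence $\rho\sim\varepsilon\sqrt{\kappa/(\pi r\Lambda)}$. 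Substituting the kernel decomposition \eqref{6-9} and integrating in polar coordinates yields
\[
\mathcal{K}\zeta^{\varepsilon,\Lambda}(r,z)\le\frac{r\kappa}{2\pi}\log\frac{1}{\varepsilon}+\frac{r\kappa}{4\pi}\log\Lambda+C.
\]
For the multiplier, I would apply Lemma \ref{le3} with the optimal choice $a=r_*$ (for which $\tfrac{a\kappa^2}{4\pi}-\tfrac{\kappa Wa^2}{2}$ attains its maximum value $\tfrac{\kappa^3}{32\pi^2W}$) and feed this into Lemma \ref{le4}, using in addition the concentration identity $\int r^2\zeta^{\varepsilon,\Lambda}d\nu=r_*^2\kappa+o(1)$ (immediate from Lemma \ref{le6}), to conclude
\[
-\mu^{\varepsilon,\Lambda}\le-\frac{3\kappa^2}{32\pi^2W}\log\frac{1}{\varepsilon}+|1-2\delta_1|g(\Lambda)+C.
\]

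Summing the three pieces of $\psi^{\varepsilon,\Lambda}=\mathcal{K}\zeta^{\varepsilon,\Lambda}-\tfrac{Wr^2}{2}\log(1/\varepsilon)-\mu^{\varepsilon,\Lambda}$ gives, on $\mathrm{supp}(\zeta^{\varepsilon,\Lambda})$,
\[
\psi^{\varepsilon,\Lambda}(r,z)\le\Bigl[\tfrac{r\kappa}{2\pi}-\tfrac{Wr^2}{2}-\tfrac{3\kappa^2}{32\pi^2W}\Bigr]\log\tfrac{1}{\varepsilon}+\tfrac{r\kappa}{4\pi}\log\Lambda+|1-2\delta_1|g(\Lambda)+C.
\]
A direct computation shows that the bracket is a concave quadratic in $r$ that vanishes at $r=r_*$; by Lemma \ref{le6} (or its quantitative form giving a centre-of-mass estimate of order $O(\varepsilon^{1/2})$), one has $|r-r_*|=O(\varepsilon^{1/2})$ on the support, so the bracket is $O(\varepsilon^{1/2})$ and the $\log(1/\varepsilon)$ contribution is absorbed into the constant. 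Since $r\le d$ on $D$, $\tfrac{r\kappa}{4\pi}\log\Lambda\le C\log\Lambda$, and the claimed estimate follows.

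The main obstacle will be the precise constant matching: the coefficients of $\log(1/\varepsilon)$ coming from the bathtub upper bound on $\mathcal{K}\zeta^{\varepsilon,\Lambda}$, from $-\tfrac{Wr^2}{2}\log(1/\varepsilon)$, and from the lower bound on $\mu^{\varepsilon,\Lambda}$ must add to exactly zero at $r=r_*$. This cancellation is precisely the asymptotic criterion identifying the filament radius $r_*$ and reflects the fact that the coefficient of $\log(1/\varepsilon)$ in $E_\varepsilon$ is maximised at $a=r_*$ in Lemma \ref{le3}. The secondary subtlety is extracting from Lemma \ref{le6} a decay rate $|r-r_*|=O(\varepsilon^{1/2})$ strong enough to absorb the residual $\log(1/\varepsilon)$; this is standard for such variational problems but requires inspecting the proof of the concentration statement rather than only its qualitative form.
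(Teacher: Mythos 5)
Your overall architecture matches the paper's: bound $\mathcal{K}\zeta^{\varepsilon,\Lambda}$ on the support by a bathtub/kernel-expansion argument (your bound $\frac{r\kappa}{2\pi}\log\frac{1}{\varepsilon}+\frac{r\kappa}{4\pi}\log\Lambda+C$ agrees with the paper's \eqref{6-14}), bound $-\mu^{\varepsilon,\Lambda}$ by feeding Lemma \ref{le3} into Lemma \ref{le4}, and add the pieces. Your explicit treatment of the set $\{\zeta^{\varepsilon,\Lambda}=0\}$ via the third line of \eqref{6-3} is a nice touch the paper leaves implicit. However, there is a genuine gap in the multiplier step, and it is exactly the obstacle you flag at the end. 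With the fixed choice $a=r_*$ the residual coefficient of $\log\frac{1}{\varepsilon}$ is
\[
\frac{r\kappa}{2\pi}-\frac{Wr^2}{2}-\frac{3\kappa^2}{32\pi^2W}=-\frac{W}{2}(r-r_*)(r-3r_*),
\]
which vanishes only to \emph{first} order at $r=r_*$ (the quadratic is maximized at $2r_*$, not at $r_*$), so near the support it behaves like $Wr_*(r-r_*)$. To absorb $Wr_*(r-r_*)\log\frac{1}{\varepsilon}$ into a constant you need $|r-r_*|=O(1/\log\frac{1}{\varepsilon})$, and the same rate is needed for your identity $\int r^2\zeta^{\varepsilon,\Lambda}d\nu=r_*^2\kappa+o(1)$ once it is multiplied by $\log\frac{1}{\varepsilon}$ in Lemma \ref{le4}. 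Lemma \ref{le6} gives only the qualitative statement $\mathrm{dist}(\mathrm{supp}\,\zeta^{\varepsilon,\Lambda},(r_*,0))\to 0$ plus a diameter bound; the standard quantitative refinement (comparing the upper and lower energy bounds, where the deficit $q(r_*)-q(\bar r)=\frac{\kappa W}{2}(\bar r-r_*)^2$ is quadratic) yields only $|r-r_*|=O((\log\frac{1}{\varepsilon})^{-1/2})$, which leaves an unbounded error of order $(\log\frac{1}{\varepsilon})^{1/2}$. That error is fatal downstream: Lemma \ref{le8} needs the non-$g(\Lambda)$ terms to be $O(\log\Lambda)$ uniformly in $\varepsilon$ in order to fix a single $\Lambda_0$. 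The rate $O(\varepsilon^{1/2})$ you invoke is not available at this stage of the argument.

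The paper sidesteps this entirely by exploiting the freedom of $a$ in Lemma \ref{le3}: for each point $(r,z)\in\mathrm{supp}(\zeta^{\varepsilon,\Lambda})$ it applies Lemma \ref{le3} with $a=r$, and uses the diameter bound $\mathrm{diam}(\mathrm{supp}\,\zeta^{\varepsilon,\Lambda})\le 4d\varepsilon^{1/2}$ to write $\int r'^2\zeta^{\varepsilon,\Lambda}d\nu=r^2\kappa+O(\varepsilon^{1/2})$. Then
\[
\mu^{\varepsilon,\Lambda}\ge\Bigl(\frac{r\kappa}{2\pi}-Wr^2\Bigr)\log\frac{1}{\varepsilon}+\frac{Wr^2}{2}\log\frac{1}{\varepsilon}-|1-2\delta_1|g(\Lambda)-C
=\Bigl(\frac{r\kappa}{2\pi}-\frac{Wr^2}{2}\Bigr)\log\frac{1}{\varepsilon}-|1-2\delta_1|g(\Lambda)-C,
\]
and the $\log\frac{1}{\varepsilon}$ coefficients cancel \emph{identically} against \eqref{6-14}, for every $r$ in the support, with no information needed about where the support sits relative to $r_*$ (only that it stays in a compact subset of $(0,d)$, so that Lemma \ref{le3} applies with uniform constants, and that $\varepsilon^{1/2}\log\frac{1}{\varepsilon}=O(1)$). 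Replacing your fixed $a=r_*$ by $a=r$ repairs the proof; as written, the argument does not close.
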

\begin{proof}
In view of Lemma \ref{le2}, one has
\begin{equation}\label{6-13}
  0<K(r,z,r',z')\le \frac{\sqrt{rr'}}{2\pi}\log\frac{1}{|(r-r')^2+(z-z')^2|^\frac{1}{2}}+C, \ \ \text{in}\ \ D\times D.
\end{equation}
Here $C>0$ depend on $\varepsilon$ and $\Lambda$.
For any $x=(r,z)\in \text{supp}(\zeta^{\varepsilon,\Lambda})$, we have
\begin{equation*}
\begin{split}
   \mathcal{K}\zeta^{\varepsilon,\Lambda}(x) &  \le \frac{r+C\varepsilon^\frac{1}{2}}{2\pi}\int_D\log\frac{1}{|x-y|}\zeta^{\varepsilon,\Lambda}(y)d\nu(y)+C \\
     &\le \frac{r^2}{2\pi} \int_D\log\frac{1}{|x-y|}\zeta^{\varepsilon,\Lambda}d\textit{m}(y)+C\varepsilon^\frac{1}{2} \log \Lambda+C\\
     &\le \frac{r^2}{2\pi}\left(\log\frac{1}{\varepsilon}+\frac{\log \Lambda}{2}\right)\int_D\zeta^{\varepsilon,\Lambda}d\textit{m}(y)+C\varepsilon^\frac{1}{2} \log \Lambda+C\\
     & \le \frac{\kappa r}{2\pi}\log\frac{1}{\varepsilon}+C\log\Lambda+C \\
\end{split}
\end{equation*}
Hence
\begin{equation}\label{6-14}
  \psi^{\varepsilon,\Lambda}(r,z)\le  \left(\frac{\kappa r}{2\pi}-\frac{Wr^2}{2}\right)\log\frac{1}{\varepsilon}-\mu^{\varepsilon,\Lambda}+C\log\Lambda+C
\end{equation}
On the other hand, by Lemmas \ref{le3} and \ref{le4}, we have
\begin{equation}\label{6-15}
\begin{split}
   \mu^{\varepsilon,\Lambda} & \ge 2\kappa^{-1}E_\varepsilon(\zeta^{\varepsilon,\Lambda})- |1-2\delta_1|g(\Lambda)+ \frac{\kappa^{-1}W}{2}\log\frac{1}{\varepsilon}\int_\Pi r^2 \zeta^{\varepsilon,\Lambda} d\nu-C \\
     & \ge \left(\frac{\kappa r}{2\pi}-\frac{Wr^2}{2}\right)\log\frac{1}{\varepsilon}- |1-2\delta_1|g(\Lambda)-C.
\end{split}
\end{equation}
Combining \eqref{6-14} and \eqref{6-15}, we conclude that

\begin{equation*}
  \psi^{\varepsilon,\Lambda}(r,z) \le |1-2\delta_1|g(\Lambda)+ C \log \Lambda +C.
\end{equation*}
The proof is completed.
\end{proof}

Using Lemma \ref{le7}, we can now eliminate that patch part.
\begin{lemma}\label{le8}
  There exists some $\Lambda_0>1$ such that
\begin{equation*}
  \zeta^{\varepsilon,\Lambda_0}=\frac{1}{\varepsilon^2}f\left(\psi^{\varepsilon,\Lambda_0}\right)\chi_{_D}
\end{equation*}
provided $\varepsilon$ is small enough.
\end{lemma}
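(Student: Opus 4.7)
The strategy is to exploit Lemma \ref{le7} together with the growth control on $g$ coming from assumption $(\text{f}_3)$: I want to choose $\Lambda_0$ so large that the a priori upper bound on $\psi^{\varepsilon,\Lambda_0}$ is strictly less than $g(\Lambda_0)$, which forces the ``patch set'' $\{\psi^{\varepsilon,\Lambda_0}\ge g(\Lambda_0)\}$ in \eqref{6-1} to be empty.

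First I would recall that $\delta_1\in(0,1)$ by $(\text{f}_2)'$, so $|1-2\delta_1|<1$. Set $\alpha:=1-|1-2\delta_1|>0$. Lemma \ref{le7} then yields
\[
\psi^{\varepsilon,\Lambda}(r,z)\le g(\Lambda)-\alpha\, g(\Lambda)+C\log\Lambda+C\qquad\text{for all }(r,z)\in D,
\]
with $C$ independent of $\varepsilon,\Lambda$. Therefore it suffices to find $\Lambda_0>\gamma+1$ such that $\alpha\, g(\Lambda_0)>C\log\Lambda_0+C$.

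Next I would verify that $g(\Lambda)/\log\Lambda\to+\infty$ as $\Lambda\to+\infty$, which is where $(\text{f}_3)$ enters. For any $\tau>0$, assumption $(\text{f}_3)$ gives $f(s)\le e^{\tau s}$ for all sufficiently large $s$; writing $\Lambda=f(g(\Lambda))$ for $\Lambda>\gamma$, this yields $g(\Lambda)\ge \tau^{-1}\log\Lambda$ for all large $\Lambda$. Since $\tau>0$ is arbitrary, $g(\Lambda)/\log\Lambda\to\infty$. Hence one can fix a single $\Lambda_0$, depending only on $\alpha$ and $C$ (and thus not on $\varepsilon$), with $\alpha\, g(\Lambda_0)>C\log\Lambda_0+C$.

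With this choice of $\Lambda_0$, Lemma \ref{le7} shows that $\psi^{\varepsilon,\Lambda_0}<g(\Lambda_0)$ pointwise on $D$, so the second term in \eqref{6-1} is identically zero. We get
\[
\zeta^{\varepsilon,\Lambda_0}=\frac{1}{\varepsilon^{2}}f(\psi^{\varepsilon,\Lambda_0})\chi_{_{\{0<\psi^{\varepsilon,\Lambda_0}<g(\Lambda_0)\}\cap D}}\quad\text{a.e.~in }\Pi.
\]
Finally, since $f(s)=0$ for $s\le 0$ by $(\text{f}_1)$, the characteristic function can be replaced by $\chi_{_D}$ without changing the right-hand side, yielding the asserted identity. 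The only step with any real content is controlling $g(\Lambda)/\log\Lambda$ via $(\text{f}_3)$; everything else is direct bookkeeping from Lemma \ref{le7} and formula \eqref{6-1}.
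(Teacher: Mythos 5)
Your proposal is correct and follows essentially the same route as the paper: combine the a priori bound of Lemma \ref{le7} with the fact that $(\text{f}_3)$ forces $g(\Lambda)/\log\Lambda\to+\infty$, so that for a fixed large $\Lambda_0$ the patch set $\{\psi^{\varepsilon,\Lambda_0}\ge g(\Lambda_0)\}$ in \eqref{6-1} is negligible. The only (welcome) additions are that you spell out the deduction $g(\Lambda)\ge\tau^{-1}\log\Lambda$ from $(\text{f}_3)$, which the paper asserts without proof, and the trivial final replacement of $\chi_{\{0<\psi<g(\Lambda_0)\}\cap D}$ by $\chi_{D}$.
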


\begin{proof}
  Notice that
\begin{equation}\label{6-16}
  \psi^{\varepsilon,\Lambda} \ge g(\Lambda),\ \ \text{a.e.\ on}\ \  \{\zeta^{\varepsilon,\Lambda}=\frac{\Lambda}{\varepsilon^2}\}.
\end{equation}
In view of Lemma \ref{le7}, there is some $C>0$ not depending on $\varepsilon$ and $\Lambda$ satisfying
\begin{equation}\label{6-17}
   \psi^{\varepsilon,\Lambda}(r,z)\le |1-2\delta_1|g(\Lambda)+ C \log \Lambda +C, \ \ \forall~ (r,z)\in D.
\end{equation}
Combining \eqref{6-16} and \eqref{6-17}, we get
\begin{equation*}
   g(\Lambda)\le |1-2\delta_1|g(\Lambda)+ C \log \Lambda +C, \ \ \text{a.e.\ on}\ \  \{\zeta^{\varepsilon,\Lambda}=\frac{\Lambda}{\varepsilon^2}\},
\end{equation*}
that is,
\begin{equation*}
  (1-|1-2\delta_1|)g(\Lambda)\le C \log \Lambda +C, \ \text{a.e.\ on}\ \  \{\zeta^{\varepsilon,\Lambda}=\frac{\Lambda}{\varepsilon^2}\}.
\end{equation*}
Recalling the assumption $(\text{f}_3)$, we have
\begin{equation*}
  \lim_{\Lambda \to +\infty}\left((1-|1-2\delta_1|)g(\Lambda)-C\log\Lambda \right)=+\infty.
\end{equation*}
Therefore, we must have $\textit{m}(\{\zeta^{\varepsilon,\Lambda}={\Lambda}{\varepsilon^{-2}}\})=0$ if $\Lambda$ is sufficiently large, say $\Lambda \ge \Lambda_0$. The proof is completed.
\end{proof}

For the rest of this section, we shall abbreviate $(\mathcal{A}_{\varepsilon, \Lambda_0}, \zeta^{\varepsilon, \Lambda_0},\psi^{\varepsilon, \Lambda_0}, \mu^{\varepsilon, \Lambda_0})$ as $(\mathcal{A}_\varepsilon, \zeta^\varepsilon, \psi^{\varepsilon}, \mu^{\varepsilon})$. We now further study the profile of $\zeta^\varepsilon$.
\begin{lemma}\label{le9}
  For all sufficiently small $\varepsilon$, there holds
\begin{equation}\label{6-18}
  \zeta^\varepsilon=\frac{1}{\varepsilon^2}f\left(\psi^\varepsilon\right).
\end{equation}
\end{lemma}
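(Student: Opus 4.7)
The strategy is to build on Lemma~\ref{le8}, which already yields $\zeta^\varepsilon = \frac{1}{\varepsilon^2} f(\psi^\varepsilon)\chi_{_D}$, and to show that for small $\varepsilon$ the cut-off $\chi_{_D}$ is redundant, i.e.\ $f(\psi^\varepsilon) \equiv 0$ on $\Pi\setminus D = \{r \ge d\}$. Since $f(s)=0$ for $s\le 0$ by $(\text{f}_1)$, this reduces to establishing $\psi^\varepsilon(r,z) \le 0$ whenever $r \ge d$, for all sufficiently small $\varepsilon$.

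First I would bound $\mathcal{K}\zeta^\varepsilon$ on $\{r \ge d\}$ uniformly in $\varepsilon$. By Lemma~\ref{le6}, for small $\varepsilon$ the support of $\zeta^\varepsilon$ lies in a small neighborhood of $(\kappa/(4\pi W),0)=(d-1,0)$, so every $y=(r',z') \in \mathrm{supp}(\zeta^\varepsilon)$ satisfies $r' \le d - 1/2$. Consequently, for $(r,z)$ with $r \ge d$ one has $|x-y| \ge 1/2$, so the parameter $\sigma$ of \eqref{6-7} is bounded below. Using the estimate \eqref{6-8} and splitting into the regions $d \le r \le 2d$ (where $\sinh^{-1}(1/\sigma) \le C$) and $r \ge 2d$ (where the asymptotic $\sinh^{-1}(1/\sigma) \sim 1/\sigma \le C r^{-1/2}$ absorbs the factor $(rr')^{1/2}$), together with the mass bound $\int_\Pi \zeta^\varepsilon\,d\nu \le \kappa$, I obtain $\mathcal{K}\zeta^\varepsilon(r,z) \le C$ on $\{r \ge d\}$ with $C$ independent of $\varepsilon$.

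Next I would derive a matching lower bound $\mu^\varepsilon \ge c_0 \log(1/\varepsilon) - C$ with some $c_0 > 0$. Apply Lemma~\ref{le3} at the optimal value $a = \kappa/(4\pi W) \in (0,d)$ to get $E_\varepsilon(\zeta^\varepsilon) \ge \frac{\kappa^3}{32\pi^2 W}\log(1/\varepsilon) - C$. Substituting this into Lemma~\ref{le4} (with $\Lambda=\Lambda_0$ now fixed, so that $|1-2\delta_1|g(\Lambda_0)$ is absorbed into $C$) and discarding the non-negative term $\frac{W}{2\kappa}\log(1/\varepsilon)\int_\Pi r^2 \zeta^\varepsilon\,d\nu$, I obtain
\[
\mu^\varepsilon \ge \frac{\kappa^2}{16\pi^2 W}\log\frac{1}{\varepsilon} - C,
\]
so one may take $c_0 = \frac{\kappa^2}{16\pi^2 W}>0$.

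Combining the two estimates, on $\{r \ge d\}$,
\[
\psi^\varepsilon(r,z) = \mathcal{K}\zeta^\varepsilon(r,z) - \frac{Wr^2}{2}\log\frac{1}{\varepsilon} - \mu^\varepsilon \;\le\; C - \Bigl(\frac{Wd^2}{2} + c_0\Bigr)\log\frac{1}{\varepsilon},
\]
which is negative for all sufficiently small $\varepsilon$. Hence $f(\psi^\varepsilon) \equiv 0$ on $\Pi \setminus D$, and Lemma~\ref{le8} sharpens to \eqref{6-18}. The main technical obstacle is pinning down the lower bound on $\mu^\varepsilon$: the separate bounds of Lemmas~\ref{le3} and~\ref{le4} must be chained at exactly the critical value $a = \kappa/(4\pi W)$, which is where the leading-order logarithmic energy matches the concentration location predicted by Lemma~\ref{le6}. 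Any sub-optimal choice of $a$ would still yield a positive leading coefficient (so the argument remains qualitatively correct), but using the optimal $a$ keeps the constants clean and meshes with the asymptotics stated in the theorem.
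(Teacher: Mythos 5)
Your proposal is correct, but it reaches the key fact $\psi^\varepsilon\le 0$ on $\Pi\setminus D$ by a different route than the paper. The paper's proof is soft: since Lemma \ref{le6} puts $\mathrm{supp}(\zeta^\varepsilon)$ strictly inside $D$, one has $\mathcal{L}\psi^\varepsilon=0$ on $\Pi\setminus\mathrm{supp}(\zeta^\varepsilon)$ (note $\mathcal{L}(r^2)=0$), with $\psi^\varepsilon\le 0$ on $\partial\Pi\cup\partial(\mathrm{supp}(\zeta^\varepsilon))$ and at infinity, so the maximum principle gives $\psi^\varepsilon\le 0$ outside the support and in particular outside $D$ --- no quantitative estimates are needed. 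You instead bound $\mathcal{K}\zeta^\varepsilon$ pointwise on $\{r\ge d\}$ via the kernel estimate \eqref{6-8} and beat it with the $-\frac{Wr^2}{2}\log\frac{1}{\varepsilon}-\mu^\varepsilon$ terms; your splitting of $\{r\ge d\}$ into $d\le r\le 2d$ and $r\ge 2d$ and the resulting uniform bound $\mathcal{K}\zeta^\varepsilon\le C$ are sound, and your chaining of Lemmas \ref{le3} and \ref{le4} at $a=\kappa/(4\pi W)$ correctly yields $\mu^\varepsilon\ge \frac{\kappa^2}{16\pi^2 W}\log\frac{1}{\varepsilon}-C$ (this is essentially \eqref{6-15} and the content of Lemma \ref{le10}). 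Your approach is more quantitative and in fact gives decay of $\psi^\varepsilon$ on $\{r\ge d\}$ of order $-\log\frac{1}{\varepsilon}$, at the cost of more computation; note also that the sharp multiplier bound is not actually needed here, since the crude bound $\mu^\varepsilon\ge -g(\Lambda_0)$ from Lemma \ref{le1} already makes $C-\frac{Wd^2}{2}\log\frac{1}{\varepsilon}-\mu^\varepsilon$ negative for small $\varepsilon$, so your second step could be replaced by one line. The paper's maximum-principle argument is shorter but gives only the sign, which is all the lemma requires.
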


\begin{proof}
In view of Lemma \ref{le6}, we have
\begin{equation*}
  dist\left(\text{supp}(\zeta^\varepsilon), \partial D\right)>0
\end{equation*}
provided $\varepsilon$ is sufficiently small. Hence
\begin{equation*}
  \begin{split}
      \mathcal{L}\psi^\varepsilon&=0\ \ \text{in}\  \Pi \backslash  supp(\zeta^\varepsilon), \\
      \psi^\varepsilon&\le 0\ \ \text{on}\  \partial \Pi \cup \partial \big(supp(\zeta^\varepsilon)\big), \\
     \psi^\varepsilon &\le 0 \ \ \text{at}\  \infty .
  \end{split}
\end{equation*}
By the maximum principle, we conclude that $\psi^\varepsilon\le 0$ in $\Pi \backslash D$.
The proof is completed.
\end{proof}

We can sharpen Lemma \ref{le6} as follows. For proof, we refer to \cite{CWZ}\,(see also \cite{CWZ2}).
\begin{lemma}\label{sharp}
  There exists a $R_0>1$ not depending on $\varepsilon$ such that
\begin{equation*}
diam[\text{supp}(\zeta^{\varepsilon,\Lambda})] \le R_0 \varepsilon,
\end{equation*}
provided $\varepsilon$ is small enough.
\end{lemma}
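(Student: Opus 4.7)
Here is my plan. The aim is to upgrade the diameter estimate of Lemma~\ref{le6} from $O(\varepsilon^{1/2})$ to $O(\varepsilon)$ by a sharpening argument in the spirit of \cite{CWZ, CWZ2}. The key is to match, up to additive $O(1)$, the leading-order asymptotics of $\mu^\varepsilon$ and of the Newton-type potential $\mathcal{K}\zeta^\varepsilon(x)$ at arbitrary points $x \in \mathrm{supp}(\zeta^\varepsilon)$. Once the matching is achieved, the inequality $\psi^\varepsilon(x) \ge 0$ will force the $\log(1/\varepsilon)$ contributions to cancel, leaving a constraint of the form $\log(C\varepsilon/|x-x_*|) \ge -C$ with $x_* := (\kappa/(4\pi W),\,0)$.

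First, I derive sharp asymptotics for $\mu^\varepsilon$. Set $r_* := \kappa/(4\pi W)$. The quadratic $a \mapsto \frac{a\kappa^2}{4\pi} - \frac{\kappa W a^2}{2}$ is maximized at $a = r_*$ with maximal value $\frac{r_*\kappa^2}{8\pi}$. Inserting this into Lemma~\ref{le3} and then into Lemma~\ref{le4}, combined with Corollary~\ref{le5} and the quantitative limit $\int r^2 \zeta^\varepsilon\,d\nu = r_*^2\kappa + O(\varepsilon^{1/2})$ (a consequence of Lemma~\ref{le6} via $(r-r_*)^2 = O(\varepsilon)$ on $\mathrm{supp}(\zeta^\varepsilon)$), I obtain
$$\mu^\varepsilon \ge \left(\frac{r_*\kappa}{4\pi} + \frac{Wr_*^2}{2}\right)\log\frac{1}{\varepsilon} - C = \frac{r_*\kappa}{2\pi}\log\frac{1}{\varepsilon} - C,$$
where the last equality uses $Wr_* = \kappa/(4\pi)$.

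Second, by the logarithmic splitting of $K$ in Lemma~\ref{le2} and $\sqrt{rr'} = r_* + O(\varepsilon^{1/2})$ on $\mathrm{supp}(\zeta^\varepsilon) \times \mathrm{supp}(\zeta^\varepsilon)$, I have for every $x \in \Pi$
$$\mathcal{K}\zeta^\varepsilon(x) \le \frac{r_*}{2\pi}\int\log\frac{2r_*}{|x-y|}\,\zeta^\varepsilon(y)\,d\nu(y) + C.$$
The bathtub principle applied to the logarithmic kernel, under the constraints $\zeta^\varepsilon \le \Lambda_0/\varepsilon^2$ and $\int\zeta^\varepsilon\,d\nu = \kappa$, then yields
$$\mathcal{K}\zeta^\varepsilon(x) \le \frac{r_*\kappa}{2\pi}\log\frac{1}{\max(|x-x_*|,\,C_1\varepsilon)} + C,$$
where $C_1 := \sqrt{\kappa/(\pi\Lambda_0 r_*)}$ is the radius of the bathtub ball carrying mass $\kappa$ at density $\Lambda_0/\varepsilon^2$. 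For $x \in \mathrm{supp}(\zeta^\varepsilon)$, the inequality $\mathcal{K}\zeta^\varepsilon(x) \ge \frac{Wr^2}{2}\log(1/\varepsilon) + \mu^\varepsilon$ combined with these two bounds and $\frac{Wr^2}{2} = \frac{Wr_*^2}{2} + O(\varepsilon^{1/2})$ makes every $\log(1/\varepsilon)$-term cancel, leaving $\log\frac{C_1\varepsilon}{|x-x_*|} \ge -C''$, i.e. $|x-x_*| \le R_0\varepsilon$ with $R_0 := C_1 e^{C''}$.

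The main obstacle is achieving $O(1)$ precision in both the bound on $\mu^\varepsilon$ and the bound on $\mathcal{K}\zeta^\varepsilon(x)$: even an $o(\log(1/\varepsilon))$ error inside the logarithm would degrade the diameter estimate from $\varepsilon$ to $\varepsilon^{1-\delta}$. In Step~1 this requires the quadratic expansion $r^2 = r_*^2 + 2r_*(r-r_*) + (r-r_*)^2$ to be integrated against $\zeta^\varepsilon$ with $O(1)$ accuracy, which is exactly what Lemma~\ref{le6} delivers. In Step~2 the bathtub computation must be carried out with the exact coefficient $\frac{r_*\kappa}{2\pi}$ in front of $\log(1/\varepsilon)$ so as to match the corresponding coefficient appearing in the lower bound on $\mu^\varepsilon$; any slack in this matching translates directly into a logarithmic loss, and hence a polynomial loss, in the final diameter bound.
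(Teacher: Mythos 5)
The paper does not actually prove Lemma \ref{sharp}; it defers to \cite{CWZ,CWZ2}, where the argument is the Turkington-type scheme of \cite{Tur83} that you are clearly aiming at. Your overall strategy (match the $\log(1/\varepsilon)$ coefficients of $\mu^\varepsilon$ and of $\mathcal{K}\zeta^\varepsilon$ on the support up to $O(1)$) is the right one, but two steps do not go through as written. First, your lower bound on $\mu^\varepsilon$ centered at $r_*$ is not available: Lemma \ref{le6} gives $\mathrm{dist}(\mathrm{supp}(\zeta^\varepsilon),(r_*,0))\to 0$ with \emph{no rate}, so on the support you only know $|r-r_*|=o(1)$, not $O(\varepsilon^{1/2})$; hence $\int r^2\zeta^\varepsilon\,d\nu=r_*^2\kappa+o(1)$ (the first-order term $2r_*\int(r-r_*)\zeta^\varepsilon d\nu$ is the culprit), and after multiplication by $\tfrac{W}{2\kappa}\log\tfrac1\varepsilon$ you lose $o(\log\tfrac1\varepsilon)$ --- exactly the loss you yourself warn would degrade the conclusion to $\varepsilon^{1-\delta}$. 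The standard repair is not to center at $r_*$ at all: for a test point $x=(r,z)\in\mathrm{supp}(\zeta^\varepsilon)$, apply Lemma \ref{le3} with $a=r$; the error then becomes $\tfrac{W}{2\kappa}\log\tfrac1\varepsilon\int(r'^2-r^2)\zeta^\varepsilon d\nu$ with both $r'$ and $r$ in the support, and the crude bound $\mathrm{diam}\le 4d\varepsilon^{1/2}$ of Lemma \ref{le6} makes this $O(\varepsilon^{1/2}\log\tfrac1\varepsilon)=o(1)$. (There is also an arithmetic slip: $\tfrac{r_*\kappa}{4\pi}+\tfrac{Wr_*^2}{2}=\tfrac{3\kappa r_*}{8\pi}\ne\tfrac{\kappa r_*}{2\pi}$; with your stated value of $\mu^\varepsilon$ the final inequality would fail for every $x$ in the support.)

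Second, the bound $\mathcal{K}\zeta^\varepsilon(x)\le\tfrac{r_*\kappa}{2\pi}\log\frac{1}{\max(|x-x_*|,\,C_1\varepsilon)}+C$ is not what the bathtub principle yields, and the way you invoke it is circular. Under $0\le\zeta^\varepsilon\le\Lambda_0\varepsilon^{-2}$ and $\int\zeta^\varepsilon d\nu=\kappa$ the rearrangement bound gives $\int\log\frac{1}{|x-y|}\zeta^\varepsilon(y)\,d\nu(y)\le\kappa\log\frac{1}{C_1\varepsilon}+C$ \emph{uniformly in} $x$, with no dependence on $|x-x_*|$; to extract decay in $|x-x_*|$ you would already need to know that the vorticity sits in an $O(\varepsilon)$-ball around $x_*$, which is in fact stronger than the statement being proved (Lemma \ref{sharp} and Theorem \ref{thm}(iii) assert only $\mathrm{diam}\le R_0\varepsilon$ together with the distance to $(r_*,0)$ tending to zero, not $|x-x_*|\le R_0\varepsilon$). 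The correct endgame splits the potential at a support point $x$ into the contributions of $B_{R\varepsilon}(x)$ and its complement: with $m(R)=\int_{\Pi\setminus B_{R\varepsilon}(x)}\zeta^\varepsilon d\nu$ one gets $\int\log\frac{1}{|x-y|}\zeta^\varepsilon d\nu\le\kappa\log\tfrac1\varepsilon-m(R)\log R+C$, and the matched lower bound forces $m(R)\log R\le C$. Choosing $R$ with $\log R>2C/\kappa$ yields $m(R)<\kappa/2$ for every $x$ in the support, so for any two support points the balls $B_{R\varepsilon}(x_1)$ and $B_{R\varepsilon}(x_2)$ each carry more than half the mass and must intersect, whence $\mathrm{diam}[\mathrm{supp}(\zeta^\varepsilon)]\le 4R\varepsilon$. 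Without this last step your argument does not actually produce a diameter bound.
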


Combining Lemmas \ref{le3}, \ref{le4} and \ref{le6}, we get the following asymptotic expansions.
\begin{lemma}\label{le10}
As $\varepsilon \to 0^+$, we have
\begin{align}
\label{219} E_\varepsilon(\zeta^\varepsilon) & =\left(\frac{\kappa^2r_*}{4\pi}-\frac{\kappa Wr_*^2}{2}\right)\log{\frac{1}{\varepsilon}}+O(1), \\
\label{220}  \mu^\varepsilon & =\left(\frac{\kappa r_*}{2\pi}-\frac{Wr_*^2}{2}\right)\log{\frac{1}{\varepsilon}}+O(1),
\end{align}
where $r_*=\kappa/(4\pi W)$.
\end{lemma}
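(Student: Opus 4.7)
The plan is to prove \eqref{219} and \eqref{220} by sandwiching each quantity between matching upper and lower bounds of the form $c\log\frac{1}{\varepsilon}+O(1)$. The lower bound for \eqref{219} is delivered directly by Lemma \ref{le3}: since $d=\kappa/(4\pi W)+1=r_*+1>r_*$, any closed interval $[b,c]\subset(0,d)$ containing $r_*$ is admissible, so applying Lemma \ref{le3} with $a=r_*$ yields
\[
E_\varepsilon(\zeta^\varepsilon)\ge \left(\frac{\kappa^2 r_*}{4\pi}-\frac{\kappa W r_*^2}{2}\right)\log\frac{1}{\varepsilon}-C.
\]

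For the matching upper bound I would reuse the pointwise estimate that appears inside the proof of Lemma \ref{le7}. With the parameter $\Lambda=\Lambda_0$ now fixed, that argument (which uses both the Green's-function expansion \eqref{6-9} and the concentration guaranteed by Lemma \ref{le6}) produces
\[
\mathcal{K}\zeta^\varepsilon(x)\le \frac{\kappa r}{2\pi}\log\frac{1}{\varepsilon}+C \qquad \text{for every } x=(r,z)\in \text{supp}(\zeta^\varepsilon).
\]
Integrating this against $\zeta^\varepsilon\,d\nu$, invoking Corollary \ref{le5} together with the moment asymptotics $\int r^j\zeta^\varepsilon\,d\nu=r_*^j\kappa+O(\varepsilon^{1/2})$ for $j=1,2$ (immediate from Lemma \ref{le6}), and discarding the nonnegative $G$-term, one obtains
\[
E_\varepsilon(\zeta^\varepsilon)\le \left(\frac{\kappa^2 r_*}{4\pi}-\frac{\kappa W r_*^2}{2}\right)\log\frac{1}{\varepsilon}+O\bigl(\varepsilon^{1/2}\log(1/\varepsilon)\bigr)+O(1).
\]
Since $\varepsilon^{1/2}\log(1/\varepsilon)=o(1)$, this closes the gap and establishes \eqref{219}.

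To prove \eqref{220}, the lower bound is obtained by substituting \eqref{219} and the asymptotic $\int r^2\zeta^\varepsilon\,d\nu=r_*^2\kappa+O(\varepsilon^{1/2})$ into Lemma \ref{le4}; the $Wr_*^2$ coefficients partially cancel via $\frac{\kappa r_*}{2\pi}-Wr_*^2+\frac{Wr_*^2}{2}=\frac{\kappa r_*}{2\pi}-\frac{Wr_*^2}{2}$, yielding the desired lower bound up to $O(1)$. For the matching upper bound, note that Lemma \ref{le9} together with assumption $(\text{f}_1)$ forces $\psi^\varepsilon\ge 0$ on $\text{supp}(\zeta^\varepsilon)$. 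Picking any $x=(r,z)$ in the support and using the defining relation \eqref{6-2} together with Lemma \ref{le7}'s pointwise estimate gives
\[
\mu^\varepsilon \le \mathcal{K}\zeta^\varepsilon(x)-\frac{Wr^2}{2}\log\frac{1}{\varepsilon}\le \left(\frac{\kappa r}{2\pi}-\frac{Wr^2}{2}\right)\log\frac{1}{\varepsilon}+C,
\]
and substituting $r=r_*+O(\varepsilon^{1/2})$ from Lemma \ref{le6} produces the matching upper bound and hence \eqref{220}.

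\textbf{Expected obstacle.} No single step here is especially deep; all of the analytical muscle has already been packaged in Lemmas \ref{le3}, \ref{le4}, \ref{le6}, \ref{le7}, \ref{le9} and Corollary \ref{le5}. The only genuine care required is bookkeeping to verify that the $O(\varepsilon^{1/2}\log(1/\varepsilon))$ remainders created by the concentration of $\zeta^\varepsilon$ can be absorbed into $O(1)$, which works precisely because $\text{diam}(\text{supp}(\zeta^\varepsilon))\le 4d\varepsilon^{1/2}$ shrinks faster than any inverse power of $\log(1/\varepsilon)$.
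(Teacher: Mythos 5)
Your overall strategy (lower bound from Lemma \ref{le3}, upper bound from the pointwise Green's-function estimate in the proof of Lemma \ref{le7}, then Lemma \ref{le4} for $\mu^\varepsilon$) is exactly what the paper intends --- its own ``proof'' is the single sentence ``Combining Lemmas \ref{le3}, \ref{le4} and \ref{le6}\dots'' --- but there is a concrete gap in your bookkeeping. You assert that $\int_\Pi r^j\zeta^\varepsilon\,d\nu=r_*^j\kappa+O(\varepsilon^{1/2})$ is ``immediate from Lemma \ref{le6}''. It is not: Lemma \ref{le6} controls the \emph{diameter} of $\operatorname{supp}(\zeta^\varepsilon)$ at rate $\varepsilon^{1/2}$, but its second assertion only says $\operatorname{dist}(\operatorname{supp}(\zeta^\varepsilon),(r_*,0))\to 0$ with no rate whatsoever. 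Hence all you may conclude is $\int r^j\zeta^\varepsilon\,d\nu=r_*^j\kappa+o(1)$, and an error of size $o(1)\cdot\log(1/\varepsilon)$ is \emph{not} $O(1)$. This affects every place where you ``substitute $r=r_*+O(\varepsilon^{1/2})$''.

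For \eqref{219} the gap is repairable without any information on the location: writing $q(r)=\frac{\kappa r}{4\pi}-\frac{Wr^2}{2}$, your upper bound reads
\begin{equation*}
E_\varepsilon(\zeta^\varepsilon)\le \log\tfrac{1}{\varepsilon}\int_\Pi q(r)\,\zeta^\varepsilon\,d\nu+O(1)\le \kappa\, q(r_*)\log\tfrac{1}{\varepsilon}+O(1),
\end{equation*}
because $q$ attains its maximum precisely at $r_*=\kappa/(4\pi W)$; no moment asymptotics are needed, and together with Lemma \ref{le3} this gives \eqref{219} cleanly (and, as a by-product, the quantitative variance bound $\int (r-r_*)^2\zeta^\varepsilon d\nu\le C/\log(1/\varepsilon)$, since $q(r_*)-q(r)=\frac{W}{2}(r-r_*)^2$). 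For \eqref{220}, however, the relevant profile is $p(r)=\frac{\kappa r}{2\pi}-\frac{Wr^2}{2}$, whose maximum is at $2r_*$, not $r_*$; since $p'(r_*)=\frac{\kappa}{4\pi}\neq 0$, first-order errors in the radial location enter \emph{linearly}, so the concavity trick does not apply and a genuine rate for $|r-r_*|$ on the support is indispensable. The variance bound above only yields $|{\textstyle\int} (r-r_*)\zeta^\varepsilon d\nu|=O((\log(1/\varepsilon))^{-1/2})$, which when multiplied by $\log(1/\varepsilon)$ leaves an $O((\log(1/\varepsilon))^{1/2})$ remainder in both your upper and lower bounds for $\mu^\varepsilon$ --- not the claimed $O(1)$. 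Closing this last gap requires the sharper localization arguments of \cite{FT,CWZ,CWZ2} (essentially a bootstrap between the energy, the multiplier and the support), which neither your write-up nor the lemmas you invoke supply.
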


Define the center of $\zeta^\varepsilon$ to be
\begin{equation*}
  x^\varepsilon=\frac{\int_\Pi x\zeta^\varepsilon(x)d\textit{m}(x)}{\int_\Pi \zeta^\varepsilon(x)d\textit{m}(x)}.
\end{equation*}
Let $\eta^\varepsilon(x)=\varepsilon^2\zeta^\varepsilon(x^\varepsilon+\varepsilon x)$. The following result determines the asymptotic nature of $\zeta^\varepsilon$ in terms of its scaled version $\eta^\varepsilon$. Since the proof of is very similar to that in \cite{CWZ2} without significant changes, we omit it here.

\begin{lemma}\label{le11}
Every accumulation point of the family $\{\eta^{\varepsilon}:\varepsilon>0\}$ in the weak topology of $L^2$ must be a radially nonincreasing function.
\end{lemma}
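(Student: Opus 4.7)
First I would observe that by Lemma \ref{sharp} and $0 \le \eta^\varepsilon \le \Lambda_0$, the family $\{\eta^\varepsilon\}$ is bounded in $L^\infty$ with uniformly compact support in $\overline{B_{R_0/2}(0)}$. Moreover, Lemma \ref{le9} gives $\eta^\varepsilon(x) = f(\varphi^\varepsilon(x))$ with $\varphi^\varepsilon(x) := \psi^\varepsilon(x^\varepsilon + \varepsilon x)$, and $f(\psi^\varepsilon) \le \Lambda_0$ together with $f$ strictly increasing makes $\psi^\varepsilon$ uniformly bounded. Rescaling the equation $\mathcal{L}\psi^\varepsilon = \zeta^\varepsilon$ in the small variable $x$ yields a semilinear Poisson equation for $\varphi^\varepsilon$ with uniformly bounded right-hand side; standard elliptic regularity then gives uniform $C^{1,\alpha}$ bounds on $\varphi^\varepsilon$, so along any weakly convergent subsequence $\eta^{\varepsilon_n} \rightharpoonup \eta$, the Arzela--Ascoli theorem produces $\varphi^{\varepsilon_n} \to \varphi$ in $C^1_{\mathrm{loc}}$ and hence $\eta^{\varepsilon_n} \to \eta = f(\varphi)$ strongly in $L^2$. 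The plan is to test the maximality of $\zeta^\varepsilon$ against a radially rearranged competitor, extract the leading-order comparison, and invoke the equality case of the Riesz rearrangement inequality in the limit.

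\textbf{Rearrangement competitor and energy expansion.} Suppose for contradiction $\eta \neq \eta^\#$, where $\eta^\#$ denotes the symmetric decreasing rearrangement of $\eta$ about the origin on $(\mathbb{R}^2, m)$. Let $(\eta^\varepsilon)^\#$ denote the same rearrangement applied to $\eta^\varepsilon$ and set
\[
\tilde{\zeta}^\varepsilon(y) := \frac{1}{\varepsilon^2}(\eta^\varepsilon)^\#\!\left(\frac{y - x^\varepsilon}{\varepsilon}\right),
\]
which lies in $\mathcal{A}_\varepsilon$ for small $\varepsilon$ since it has the same $m$-distribution as $\zeta^\varepsilon$ and, by Lemma \ref{sharp}, support in $D$. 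Rescaling $E_\varepsilon$ via $y = x^\varepsilon + \varepsilon x$ and invoking \eqref{6-9}, the $O(\varepsilon)$-smallness of the support makes every $r$-weight equal $r^\varepsilon + O(\varepsilon) = r_* + o(1)$ and $\log(1/\sigma) = \log(1/\varepsilon) + \log(2r_*) - \log|x-x'| + o(1)$. The portions of $E_\varepsilon$ depending only on the $m$-distribution (the $G$-term) or only on the total mass (the $\log(1/\varepsilon)$- and constant-coefficient pieces of the quadratic and $\int r^2 \zeta\,d\nu$ terms) are preserved by the rearrangement, leaving
\[
E_\varepsilon(\tilde{\zeta}^\varepsilon) - E_\varepsilon(\zeta^\varepsilon) = \frac{r_*^3}{4\pi}\iint \log\frac{1}{|x-x'|}\bigl[(\eta^\varepsilon)^\#(x)(\eta^\varepsilon)^\#(x') - \eta^\varepsilon(x)\eta^\varepsilon(x')\bigr]\,dm(x)\,dm(x') + o(1).
\]

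\textbf{Riesz and conclusion.} On the common support (inside a ball $B_R$ for $R > R_0/2$), $\log(R/|\cdot|)$ is nonnegative and strictly symmetric decreasing, so the Riesz rearrangement inequality makes the displayed double integral nonnegative (the additive $\log R$ cancels since total masses agree). Maximality of $\zeta^\varepsilon$ gives $E_\varepsilon(\tilde{\zeta}^\varepsilon) \le E_\varepsilon(\zeta^\varepsilon)$, so this nonnegative quantity is at most $o(1)$. Passing to the limit along $\varepsilon_n \to 0$, strong $L^2$ convergence $\eta^{\varepsilon_n} \to \eta$ together with the non-expansivity $\|(\eta^{\varepsilon_n})^\# - \eta^\#\|_{L^2} \le \|\eta^{\varepsilon_n} - \eta\|_{L^2} \to 0$ and compactness of the log-kernel operator on $L^2(B_R)$ yield
\[
\iint \log\frac{1}{|x-x'|}\bigl[\eta^\#(x)\eta^\#(x') - \eta(x)\eta(x')\bigr]\,dm(x)\,dm(x') \le 0,
\]
forcing equality in Riesz for $\eta$. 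The equality case for the strictly symmetric decreasing log-kernel forces $\eta$ to be a translate of $\eta^\#$; the centering $\int x\,\eta^\varepsilon(x)\,dm(x) = 0$ built into the definition of $x^\varepsilon$ passes to the limit by strong convergence, pinning the translation to zero. Therefore $\eta = \eta^\#$, contradicting the assumption.

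\textbf{Main difficulty.} The key obstacle is the strong $L^2$ compactness of $\{\eta^\varepsilon\}$: without it, $(\eta^{\varepsilon_n})^\#$ need not approach $\eta^\#$ in any useful sense and the Riesz limit cannot be identified. This upgrade from weak to strong convergence is supplied by Lemma \ref{le9} (no patch) together with standard elliptic regularity for the rescaled Poisson equation. A subsidiary point is that the sharpened diameter bound $O(\varepsilon)$ of Lemma \ref{sharp}, rather than the weaker $O(\varepsilon^{1/2})$ of Lemma \ref{le6}, is essential: it is what keeps the $r$-variation across the support small enough to contribute only $o(1)$ after multiplication by the $\log(1/\varepsilon)$ prefactor.
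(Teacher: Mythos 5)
The paper does not actually write out a proof of Lemma \ref{le11} (it defers to \cite{CWZ2}), so there is no line-by-line comparison to make; but your architecture --- rescale, compare $\zeta^\varepsilon$ with the competitor built from the symmetric decreasing rearrangement $(\eta^\varepsilon)^\#$ recentred at $x^\varepsilon$, check that all terms of $E_\varepsilon$ except the planar logarithmic interaction agree up to $o(1)$, and then invoke the equality case of the Riesz rearrangement inequality --- is exactly the Turkington-type argument that the cited reference follows, and your bookkeeping of which terms are rearrangement-invariant is essentially right. One item you assert too quickly: membership of $\tilde\zeta^\varepsilon$ in $\mathcal{A}_\varepsilon$ is not implied by ``same $m$-distribution,'' because the mass constraint $\int\tilde\zeta^\varepsilon\,d\nu\le\kappa$ is taken with respect to $d\nu=r\,dm$, which is not rearrangement-invariant. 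It does work out, but only because $x^\varepsilon$ is the $m$-barycenter and $(\eta^\varepsilon)^\#$ is even in $x_1$, so that $\int x_1(\eta^\varepsilon)^\#\,dm=0=\int x_1\eta^\varepsilon\,dm$ and the two $\nu$-masses coincide to the required order; this should be said.

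The genuine gap sits in the step you yourself flag as the main difficulty: the upgrade to strong $L^2$ convergence. You deduce $\eta^{\varepsilon_n}=f(\varphi^{\varepsilon_n})\to f(\varphi)$ in $L^2$ from $\varphi^{\varepsilon_n}\to\varphi$ in $C^1_{\mathrm{loc}}$, but assumption $(\text{f}_1)$ permits $\gamma=\lim_{s\to0^+}f(s)>0$, i.e.\ a jump of $f$ at the origin --- which is precisely the generality the paper advertises. On the set $\{\varphi=0\}$, which a priori may have positive measure, $f(\varphi^{\varepsilon_n})$ need not converge pointwise: it oscillates between $0$ and values $\ge\gamma$ according to the sign of $\varphi^{\varepsilon_n}$, so composition with $f$ does not commute with the $C^1_{\mathrm{loc}}$ limit and strong convergence does not follow as written. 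The gap is repairable: the limit $\varphi$ lies in $W^{2,p}_{\mathrm{loc}}$ and solves $-\Delta\varphi=r_*^2\bar\eta$, hence $\bar\eta=0$ a.e.\ on $\{\varphi=0\}$; testing the weak convergence against $\chi_{\{\varphi=0\}}$ and using that $\eta^{\varepsilon_n}$ takes values in $\{0\}\cup[\gamma,\Lambda_0]$ forces $m(\{\varphi^{\varepsilon_n}>0\}\cap\{\varphi=0\})\to0$, which restores strong $L^2$ convergence. Alternatively one can bypass strong convergence of $\eta^{\varepsilon_n}$ altogether: $(\eta^{\varepsilon_n})^\#$ is uniformly bounded, radially nonincreasing, with uniformly bounded support, hence converges strongly along a subsequence by Helly's selection theorem, while the quadratic form $T$ is already weakly continuous because the logarithmic kernel defines a compact (Hilbert--Schmidt) operator on $L^2(B_R)$; one then closes the argument by comparing $T(\eta)\le T(\eta^\#)\le T(h)=T(\eta)$ using monotonicity of $T$ under majorization among radial nonincreasing profiles. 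As it stands, your proof is complete only in the continuous case $\gamma=0$.
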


\begin{proof}[Proof of Theorem \ref{thm}]
Let $\lambda=1/\varepsilon^2$ and $\Psi^\lambda=\psi^\varepsilon$. The regularity of $\Psi^\lambda$ follows from classical Schauder's theory. And the desired results follows from the above lemmas.
\end{proof}

{\bf Acknowledgments.}
{ This work was supported by NNSF of China (No.11771469) and Chinese Academy of Sciences (
  No.QYZDJ-SSW-SYS021).
}

\phantom{s}
 \thispagestyle{empty}

\end{document}